\definecolor{ggreen}{rgb}{0,0.75,0.08}
\theoremstyle{plain}
\theoremstyle{definition}
\newtheorem{theorem}{Theorem}[section]
\newtheorem{lemma}[theorem]{Lemma}
\newtheorem{definition}[theorem]{Definition}
\newcounter{dummy5} 
\newtheorem{thm5}[dummy5]{Theorem}
\newcounter{qn5counter} 
\newtheorem{qn5}[qn5counter]{Question}
\newcounter{claim5counter} 
\newtheorem{claim5}[claim5counter]{Claim}
\newtheorem{question*}{Question}
\newcommand{\A}{\ensuremath{\alpha}}
\newcommand{\K}{\ensuremath{\kappa}}
\newcommand{\B}{\ensuremath{\beta}}
\newcommand{\E}{\ensuremath{\varepsilon}}
\newcommand{\W}{\ensuremath{\omega}}
\newcommand{\WW}{\ensuremath{\Omega}}
\newcommand{\G}{\ensuremath{\gamma}}
\newcommand{\D}{\ensuremath{\delta}}
\newcommand{\GG}{\ensuremath{\Gamma}}
\newcommand{\NN}{\ensuremath{\mathbb N}}
\newcommand{\0}{\ensuremath{\varnothing}}
\newcommand{\cL}{\ensuremath{\mathcal L}}
\newcommand{\cD}{\ensuremath{\mathcal D}}
\newcommand{\cK}{\ensuremath{\mathcal K}}
\newcommand{\cB}{\ensuremath{\mathcal B}}
\newcommand{\cP}{\ensuremath{\mathcal P}}
\newcommand{\DD}{\ensuremath{\partial}}
\newcommand{\All}{\ensuremath{\forall}}
\newcommand{\cn}{\ensuremath{\frak c}}
\newcommand{\wt}{\ensuremath{\widetilde}}
\begin{document}

\openup 0.6em

\fontsize{13}{5}
\selectfont

	\begin{center}\LARGE Separable Indecomposable Continuum with\\ Exactly One Composant
	\end{center}
	
	\begin{align*}
	\text{\Large Daron Anderson }  \qquad \text{\Large Trinity College Dublin. Ireland }  
	\end{align*} 
	\begin{align*} \text{\Large andersd3@tcd.ie} \qquad \text{\Large Preprint September 2018}  
	\end{align*}$ $\\

	\begin{center}
		\textbf{ \large Abstract}
	\end{center}
	
	\noindent
	Indecomposable continua with one composant are \textit{large} in the sense of being non-metrisable.
	We adapt the method of Smith \cite{Smith1} to construct an example which is \textit{small} in the sense of being separable.

	\section{Introduction}
	
	\noindent By a \textit{Bellamy continuum} we mean an indecomposable Hausdorff continuum with exactly one composant.
	To date there are only two known classes of Bellamy continua. Examples of the first class are Stone-\v Cech remainders of certain locally-compact spaces. The remainder is a continuum and under certain set-theoretic assumptions \cite{waves,NCF2,BitLike} it has exactly one composant.	
	
	Examples of the second class are those obtained by Bellamy and Smith 
	by carefully constructing an $\W_1$-chain of metric continua and retractions,
	so the inverse limit has exactly two composants. They then select one point from each composant, and identify the two points to get exactly one composant \cite{one, smith2,Smith1, HI2Composants}.
	
	It is well-known that each Bellamy continuum is \textit{large} in the sense of being non-metrisable \cite{nadlerbook}.
	This raises the question of whether a Bellamy continuum can be \textit{small} in the sense of being separable.
	The first class mentioned above provides no examples.
	Indeed Corollary 5.6 of \cite{CSbook} says the Stone-\v Cech remainder of any well-behaved locally-compact Hausdorff space 
	is $\aleph_1$-cellular hence non-separable.
	
	In this paper we modify the inverse-system of Smith \cite{Smith1} to produce a separable Bellamy continuum in the second class.
	
	Section 3 contains preliminary results about separability of certain inverse limits of metric continua.
	Section 4 applies the results to get an inverse system of metric continua and retractions.
	The modification itself is minor and only needed to make the limit separable $-$ it is inessential to showing the limit has exactly two composants.
	We obtain a separable Bellamy continuum by \textit{spot-welding} as usual.

	Section 5 shows both composants are non-metrisable.
	One composant is separable and the other is non-separable.
	Hence our example is separable but not hereditarily separable.
	The problem is open whether there exists an hereditarily separable Bellamy continuum.

	Section 6 extends familiar results about Bellamy continua by showing 
	each hereditarily unicoherent metric continuum is a retract of a separable Bellamy continuum.
	The problem is open whether the same holds for each separable continuum.
	Section 7 shows our modification is necessary, as the original limit of Smith is non separable.

	\section{Terminology and Notation}
	\noindent
	Throughout $X$ is a continuum. 
	That means a nondegenerate compact and connected Hausdorff space.
	For background on metric continua see \cite{kur2} and \cite{nadlerbook}.
	The results cited here have analagous proofs for non-metric continua.

	We call $X$ \textit{separable} to mean it has a dense countable subset,
	\textit{hereditarily separable} to mean every subset is separable,
	and \textit{dense-hereditarily separable} to mean every dense subset is separable.
	Metric continua are separable but the converse fails in general.
	For each cardinal $\A$  we say $X$ is $\A$-\textit{cellular} to mean it admits a family of $\A$-many pairwise disjoint open subsets.
	Clearly each $\aleph_1$-cellular continuum is non-separable.

	For a subset $S \subset X$ denote by $S^\circ$ and $\overline S$ the interior and closure of $S$ respectively. 
	By \textit{boundary bumping} we mean the principle that, for each closed $E \subset X$, each component $C$ of $E$ meets $\DD E = \overline E \cap \overline {X-E}$.
	For the non-metric proof see $\S$47, III Theorem 2 of \cite{kur2}. 
	One corollary of boundary bumping is that the point $p \in X$ is in the closure of each continuum component of $X-p$.

	Throughout all maps between continua are assumed to be continuous.
	We call $f:Y \to X$ a \textit{retraction} to mean $X$ is a subspace of the continuum $Y$ and the restriction of $f$ to $X$ is the identity.
	The partition $\cP$ of $X$ into closed subsets is called \textit{upper semicontinuous} to mean the following:
	For each $P \in \cP$ and open $U \subset X$ containing $P$ there is open $V \subset U$ with $P \subset V$ and $V$ a union of elements of $\cP$.
	Upper semicontinuity of the partition is equivalent to the quotient space $X/\cP$ being a continuum.

	For $b \in X$ we omit the curly braces and write $X-b$ instead of $X-\{b\}$ without confusion.
	For $a,b \in X$ we say $X$ is \textit{irreducible} about $\{a,b\}$ to mean no proper subcontinuum of $X$ contains $\{a,b\}$.
	For $a,b \in X$ we write $[a,b]$ for the intersection of all subcontinua that contain $\{a,b\}$.
	Note $[a,b]$ is not in general connected as the interval notation suggests.
	Clearly $h\big ([a,b] \big ) = [h(a),h(b)]$ for each $a,b \in X$ and homeomorphism $h:X \to Y$. 
	
	We say $X$ is \textit{indecomposable} to mean it is not the union of two proper subcontinua.
	Equivalently each proper subcontinuum is nowhere dense.
	The \textit{composant} $\K(x)$ of the point $x \in X$ is the union of all proper subcontinua that have $x$ as an element.
	Indecomposable metric continua are partitioned into $\cn$ many pairwise disjoint composants \cite{Ccomposants}.
	In case $\K(x) \ne \K(y)$ then $X$ is irreducible about $\{x,y\}$.
	There exist indecomposable non-metric continua with exactly one composant \cite{one} henceforth called \textit{Bellamy continua}.
	
	We say $X$ is \textit{hereditarily unicoherent} to mean 
	the intersection of any two subcontinua of $X$ is empty or connected.
	Equivalently each interval $[a,b]$ is a continuum.
	It then follows $[a,b]$ is the unique subcontinuum of $X$ irreducible about $\{a,b\}$.

	\begin{figure}[!h]
		\centering
		\begin{tikzpicture}

		\usetikzlibrary{snakes}

		\begin{scope}[xscale = -40, yscale = 2]
		
		\draw[smooth,domain=0.06:0.285,samples=1000] plot (\x, { (2*sin((5/\x)r)  +2) });

		\draw[ultra thick, magenta] (0.06,0) --(0.06,4);
		
		\node[circle, fill = orange, minimum width = 0.15cm, inner sep = 0cm] at (0.285,0.075) {};
		\node[below]  at (0.285,-0.1) {$a$}; 
		
		\node[left] at (0.285,2) {$G$};

		\node[above] at (0.06, 4.25) {$I$};

		\end{scope}
		
		\end{tikzpicture}
		\caption{The $\sin(1/x)$ continuum}\label{SinContinuum}
	\end{figure}
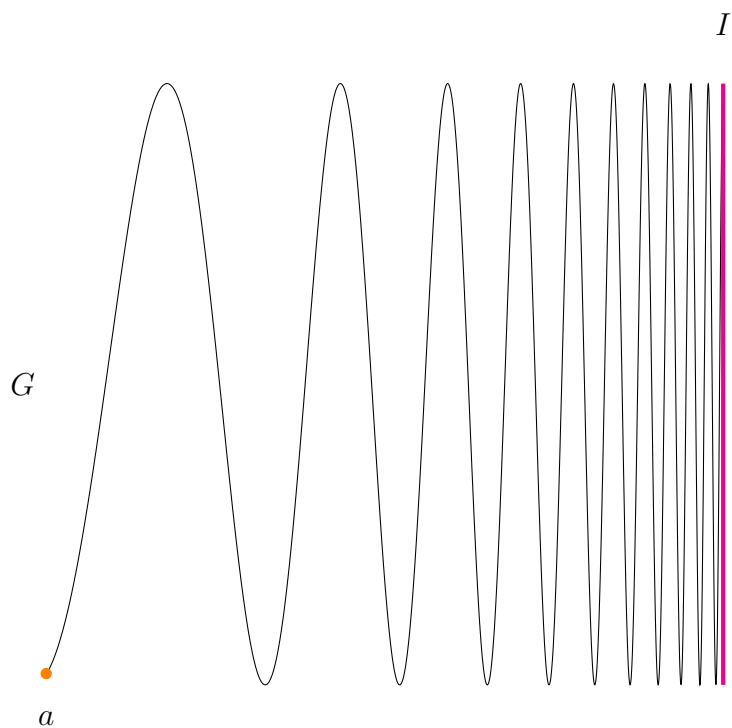

	Throughout the $\sin(1/x)$ continuum is the metric continuum defined as the union of the graph 
	$G =  \big \{ \big (x,\sin(1/x) \big ): -\pi/2 \le x < 0 \big \}$ and the arc $I = \{0\} \times [-1,1]$.
	The composant $\K(a)$ of the endpoint $a = (-\pi/2,-1)$ is equal to $G$ and its every nondegenerate subcontinuum is equal to the closure of its interior.

	The proper subcontinuum $R \subset X$ is called a \textit{rung} to mean each other subcontinuum $K \subset X$ 
	is either disjoint from, contained in, or contains $R$.
	For example the limiting arc $I$ is a rung of the $\sin(1/x)$ continuum.
	By a \textit{ladder} on $X$ we mean a nested collection of rungs of $X$ with dense union.
	Continua that admit ladders are rare.
	For example the $\sin(1/x)$ continuum admits no ladders.
	Note what we call rungs are sometimes called \textit{terminal subcontinua}, 
	but that term also has several unrelated meanings across continuum theory \cite{TerminalContinua}.

	Throughout 
	$\W = \{0,1,2, \ldots\}$ is the first infinite ordinal and $\W_1$ the first uncountable ordinal.
	Each proper initial segment of $\W_1$ is countable and each countable subset has an upper bound.
	Each countable ordinal has a cofinal subset order-isomorphic to $\W$.
	For the ordered set $\WW$ we say $\Psi \subset \WW$ is \textit{cofinal} to mean it has no upper bound in $\WW$.
	
	The poset $\WW$ is called \textit{directed} to mean for each $\G,\B \in \WW$ there is $\A \in \WW$ with $\G,\B \le \A$.
	Note most authors require $\G,\B < \A$. The stronger condition prohibits $\WW$ having a top element.
	In this paper it is convenient to allow a directed set to have a top element.
	
	An \textit{inverse system} over the directed set $\WW$ consists of the following data: $(1)$ a family of topological spaces $T(\A)$ for each $\A \in \WW$ and $(2)$ a family of continuous maps $f^\A_\B : T(\A) \to T(\B)$ for each $\B \le \A$ such that $(3)$ we have $f^\B_\G \circ f^\A_\B = f^\A_\G$ whenever $\G\le \B \le \A$. The property $(3)$ is called  \textit{commutativity of the diagram}.
	The \textit{inverse limit} $T$ of the system is the space 
	
	\begin{center}                                             
		$\displaystyle \varprojlim \{T(\A); f^\A_\B: \A,\B \in \WW\} = \Big \{(x_\A) \in \prod_{\A \in \WW} T(\A) : f^\A_\B(x_\A) = x_\B$ for all $ \B \le \A \Big \}$.
	\end{center}
	
	The functions $f^\A_\B$ are called the \textit{bonding maps}.
	Write $\pi_\B : T \to T(\B)$ for the restriction of the projection $\prod_\A T(\A) \to T(\B)$.
	If each bonding map is surjective so is each $\pi_\B$ and we call the inverse system (limit) \textit{surjective}.
	In case $\WW$ has top element $\infty$ the inverse limit is a copy of $T(\infty)$.
	The inverse limit of a system of continua is a continuum.

	\section{The Successor Stage}\label{5.3}

	We use transfinite recursion to construct the eponymous indecomposable continuum as the limit of a system \mbox{$\{X(\A); f^\A_\B : \A,\B< \W_1\}$} of metric continua and retractions. This section shows how to construct each $X(\B+1)$ from $X(\B)$. The following section deals with limit ordinals.
	
	To begin let $X(0)$ be the $\sin(1/x)$ continuum. Write $a_0$ for the endpoint and select a sequence $q^n_0$ in $X(0)$ with $x$-coordinates strictly increasing to $0$ from below.
	Observe $q^n_0$ satisfies the following definition of being a \textit{thick half-tail}.
	
	\begin{definition}\label{halftail}
		For $X$ a continuum we define a \textit{half-tail} at $a \in X$ as a sequence $q^n \subset X$ with the properties:
		
		\begin{enumerate}
			\item $[a,q^1] \varsubsetneq [a,q^2] \varsubsetneq \ldots $
			\item $\bigcup \big \{[a,q^n]:n \in \NN \big \} = \K(a)$
			\item For each $x \in X$ and $n \in \NN$ either $[a,q^n] \subset [a,x]$ or $[a,x] \subset [a,q^n]$.
		\end{enumerate}
		
		Moreover the half-tail $q^n$ is called \textit{thick} to mean each $[a,q^n]$ is the closure of its interior.
	\end{definition}
	
	Next use induction to find a family $\cD_0 = \{D_0^1,D_0^2, \ldots \}$
	of pairwise disjoint subsets of $\K(a_0)$ with each $D^n_0 \subset [a_0,q^n_0]$ dense.
	The fact that $q^n_0$ is thick implies that $\cD_0$ is a \textit{tailing family} as defined below.

	\begin{definition}\label{tailingfamily}
		Suppose the continuum $X$ has a half-tail $q^n$ at $a \in X$. By a \textit{tailing family} on $X$
		we mean a pairwise disjoint collection $\cD = \{D^1,D^2, \ldots\}$ of countable subsets of $X$
		with each $D^n \subset [a,q^n]$ and $D^n \cap [a,q^m]$ dense in $[a,q^m]$ for each $m \le n$.
	\end{definition}

	The notions of a half-tail and tailing family are pivotal to our example. Indeed as part of the construction we will at stage $\A<\W_1$ choose a half-tail $q^n_\A$ at $a_\A \in X(\A)$ and a tailing family $\cD_\A$ on $X(\A)$ so both objectss behave nicely with respect to the bonding maps. This is made precise below.
	
	In the next definition and throughout when we write for example \mbox{$a_\B \mapsto a_\G$} the map in question is understood to be the bonding map $f^\B_\G$.
	Similarily for subsets $B \subset X(\B)$ and $C \subset X(\G)$ we write $B \to C$ to mean $f^\B_\G(B) = C$.

	\begin{definition}\label{deftailmap}
		Suppose the continua $X(\B)$ and $X(\G)$ have  half-tails $q^n_\B$ and $q^n_\G$ at $a_\B \in X(\B)$ and $a_\G \in X(\G)$ and tailing families $\cD_\B$ and $\cD_\G$ respectively. The map \mbox{$f:X(\B) \to X(\G)$} is called \textit{coherent} to mean $a_\B \mapsto a_\G$ and $[a_\B,q^n_\B ] \to [a_\G,r^n_\G]$ and $f$ induces bijections $D^n_\B \to D^n_\G$ for each $n \in \NN$. The system $\{X(\B); f^\B_\G: \G,\B < \A\}$ is called coherent to mean each bonding map is coherent.
	\end{definition}
	
	At stage $\A < \W_1$ we have already constructed the coherent inverse system \mbox{$\{X(\B);f^\B_\G: \B,\G < \A\}$} of hereditarily unicoherent metric continua and retractions. We assume the following objects have been specified for each $\B < \A$:

	\begin{enumerate}[label=(\roman*)]
		\item Half-tails $q^n_\B$ at $a_\B \in X(\B)$
		
		\item Tailing families $\cD_\B = \{D_\B^1, D_\B^2, \ldots\}$ on $X(\B)$
	\end{enumerate}
	
	We also assume for each $\G,\D < \A$ the two conditions hold:
	
	\begin{enumerate}[label=(\alph*)]
		\item $\bigcup \{X(\D): \D < \G\} \subset X(\G) - \K(a_\G)$ 
		\item $\bigcup \big \{f^\G_\D  \big (X(\G) - X(\D) \big ): \A > \G > \D \big \} = \K(a_\D)$
	\end{enumerate}

	Conditions (a) and (b) come straight from \cite{one} and will ensure the limit has exactly two composants.
	Coherence will ultimately be used to construct a tailing family on the inverse limit. Once we have shown the limit is a Bellamy continuum, the next lemma gives our main result.

	\begin{lemma}\label{tailingdense}
		Suppose $X$ admits a half-tail $q^n$ and tailing family $\cD= $ $\{D^1,D^2,\ldots\}$.
		Then $X$ is separable.
	\end{lemma}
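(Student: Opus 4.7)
The plan is to verify that $D := \bigcup_{n \in \NN} D^n$ is a countable dense subset of $X$, whence $X$ is separable by definition. Countability is immediate, as $D$ is a countable union of countable sets, so all the work is in showing density.

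First, I would show $D$ is dense in $\K(a) = \bigcup_n [a, q^n]$: the tailing family condition specialised to $m = n$ gives each $D^n$ dense in $[a, q^n]$, so $D \supset D^n$ is dense in each rung $[a, q^n]$, and therefore dense in their union $\K(a)$.

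Second, the main task is to argue $\K(a)$ is dense in $X$; combined with the previous step this gives $D$ dense in $X$. Toward a contradiction, suppose $U \subset X$ is a non-empty open set disjoint from $\K(a)$ and pick $x \in U$. Since $x \notin [a, q^n]$ for any $n$, the inclusion $[a, x] \subset [a, q^n]$ fails, and condition $(3)$ of the half-tail then forces $[a, q^n] \subset [a, x]$ for every $n$, giving $\K(a) \subset [a, x]$. Applying boundary bumping to the closed set $X - U$ (with $a \in X - U$ since $a \in \K(a)$), the component $C$ of $X - U$ containing $a$ is a proper subcontinuum of $X$ with $a \in C \subset \K(a)$, and $C$ meets $\DD U$. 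A point $p \in C \cap \DD U$ lies both in $\K(a)$ and in $\overline U$; I would then leverage the nested structure of the rungs $[a, q^n]$ together with hereditary unicoherence of the continua appearing in the construction to refine this into a genuine point of $\K(a)$ inside $U$, producing the contradiction.

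The main obstacle is precisely this last refinement: strengthening \emph{$\K(a)$ touches $\overline U$ on the boundary} into \emph{$\K(a)$ meets the interior $U$}. I expect to handle it by using condition $(3)$ to locate, for the boundary point $p \in \K(a)$, a rung $[a, q^N] \ni p$, and then exploit that $D^N$ is already dense in $[a, q^N]$ so that points of $D^N$ can be found arbitrarily close to $p$. A neighbourhood argument pushes at least one such point into $U$, contradicting $U \cap \K(a) = \emptyset$. This is also the step where any extra structural assumption (such as thickness of the half-tail, or indecomposability of $X$ as it arises in the later application to the inverse limit, where one in fact has $\K(a) = X$) would simplify the picture considerably.
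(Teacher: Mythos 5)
Your reduction is the right one---$\bigcup_n D^n$ is countable, dense in each $[a,q^n]$ by the $m=n$ case of the tailing-family condition, hence dense in $\K(a)=\bigcup_n[a,q^n]$---and the paper's proof is exactly this, finishing with the one remaining fact that $\K(a)$ is dense in $X$. But that last fact is where your argument has a genuine gap, and the step you propose to close it cannot work. Having assumed $U$ is a nonempty open set disjoint from $\K(a)$, you produce a point $p\in C\cap\DD U$ with $p\in\K(a)\cap\overline U$, and then hope to push a point of $D^N$ (for some $[a,q^N]\ni p$) into $U$ because $D^N$ is dense in $[a,q^N]$ and points of $D^N$ lie arbitrarily close to $p$. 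This is circular: every point of $D^N$ lies in $[a,q^N]\subset\K(a)$, which you have assumed is disjoint from $U$, so density of $D^N$ in $[a,q^N]$ only ever yields points of $\overline U - U$, never of $U$. No amount of shrinking neighbourhoods of $p$ fixes this, and the extra structure you invoke (thickness, hereditary unicoherence, indecomposability) is not among the hypotheses of the lemma and is not needed.

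The missing ingredient is the standard fact that \emph{every} composant of a continuum is dense, which the paper simply cites. Its proof is a small variant of your boundary-bumping step: given nonempty open $U$ with $a\notin U$ (the case $a\in U$ being trivial), use regularity of the compact Hausdorff space $X$ to choose nonempty open $V$ with $\overline V\subset U$ and $a\in X-V$. The component $C$ of $a$ in the proper closed set $X-V$ is a proper subcontinuum containing $a$, hence $C\subset\K(a)$, and boundary bumping makes $C$ meet $\DD(X-V)\subset\overline V\subset U$. The point is to bump against the complement of the \emph{shrunken} set $V$, so that the boundary you land on lies in the interior of $U$ rather than on $\DD U$; applied to $X-U$ directly, as in your version, boundary bumping can only ever reach $\DD U$, which is exactly the obstruction you ran into. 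Your condition-(3) observation that $\K(a)\subset[a,x]$ for $x\in U$ is true under your contradiction hypothesis but is not needed and does not by itself yield a contradiction, since $[a,x]$ need not be connected or proper.
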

	
	\begin{proof}
		Clearly $\bigcup \cD = D^1 \cup D^2 \cup \ldots $ has the cardinality of $\NN \times \NN$ which is well known to be countable.
		Now suppose $U \subset X$ is open.
		Since $\K(a)$ is dense it meets $U$.
		Since $\K(a) = \bigcup _n [a,q^n]$ some $[a,q^n]$ meets $U$.
		Since $[a,q^n] \cap U$ is open in $[a,q^n]$ it  contains an element of $D^n$ by the definition of a tailing family.
		We conclude $\bigcup \cD$ is dense in $X$.
	\end{proof}
	
	\noindent
	
	We are now ready to begin the successor step. Suppose $\A=\B+1$ is a successor ordinal. We will construct the hereditarily unicoherent continuum $X(\B+1)$ and retraction $f^{\B+1}_\B:X(\B+1) \to X(\B)$. Then we can define the bonding maps $f^{\B+1}_\G = f^{\B+1}_\B  \circ f^{\B}_\G $.
	We will specify the objects (i) and (ii) when $\B$ is replaced by $\B+1$. Finally we will check the enlarged system is coherent and Conditions (a) and (b) hold for all $\G,\D \le \B+1$.
	
	To begin the construction of $X(\B+1)$ consider the following subset $N$ of $X(\B) \times [0,1]$.
	
	\begin{center}
		$N =  \big( \bigcup \big \{ [a_\B,q^n_\B] \times \{1/(2n-1),1/2n\} : n \in \NN \big \} \big ) \cup \big ( X(\B) \times \{0\} \big )$.
	\end{center}
	
	Define the points $b_0,b_1,b_2,\ldots \in N$.

	\begin{center}
		\begin{tabular}{ c c c }
			$b_{4n} = \big (a_\B, 1/(2n+1) \big )$ &  & $ b_{4n+1} = \big (q^{n}_\B, 1/(2n+1) \big )$ \\ 
			&  & \\  
			$b_{4n+2} = \big (q^{n}_\B, 1/(2n+2) \big )$ &  & $b_{4n+3} = \big (a_\B, 1/(2n+2) \big ) $  
		\end{tabular}
	\end{center}

	To obtain $X(\B+1)$ make for each $n \in \NN$ the identification $b_{4n+1} \sim b_{4n+2}$.
	Call that point $c_{2n+1} \in X(\B+1)$.
	Then make the identification $b_{4n+3} \sim b_{4(n+1)}$.
	Call that point $c_{2(n+1)} \in X(\B+1)$.

	Write $J({2n})$ for the quotient space of each $[a_\B,q^n_\B] \times \{1/(2n+1)\}$
	and $J({2n+1})$ for the quotient space of $[a_\B,q^n_\B] \times \{1/2n\}$. 
	Observe each $J(n)$ is irreducible from $c_{n}$ to $ c_{n+1}$.
	
	Clearly the quotient space of $\bigcup\{J(n): n \in \NN\}$
	is connected and its closure is the union with $X(\B) \times \{0\}$.
	Therefore $X(\B+1)$ is connected.
	Identify $X(\B)$ with the subspace $X(\B) \times \{0\}$.
	The projection $X(\B) \times [0,1] \to X(\B)$ respects the identifications
	and therefore induces a retraction $f^{\B+1}_\B: X(\B+1) \to X(\B)$.
	
	\begin{figure}[!h]
		\centering
		
		\begin{tikzpicture}

		\usetikzlibrary{snakes}

		\begin{scope}[ xscale = -40]


		\draw[ultra thick, magenta] (1/25,0) -- (1/25,8);
		
		\foreach \n in {2,3,4,5,6}
		{
			\draw (1/2/\n,0) -- (1/2/\n,8-8/\n);
			\draw ({1/(2*\n-1)},0) -- ({1/(2*\n-1)},8-8/\n);
			
		}

		\foreach \n in {2,3,4,5,6}
		{
			\draw[ dashed] (1/2/\n,8-8/\n) -- ({1/(2*\n-1)},8-8/\n);
			\draw[dashed] ({1/(2*\n)},0) -- ({1/(2*\n+1)},0);
		}
		
		\draw[dotted] (1/20 ,4) -- (1/13,4);

		\node[circle, fill = orange, minimum width = 0.15cm, inner sep = 0cm] at (1/3,0) {};
		\node[below] at (1/3,0-0.25) {$a_{\B+1} = (a_\B,1)$};
		\node[circle, fill = orange, minimum width = 0.15cm, inner sep = 0cm] at (1/3,4) {};
		\node[above] at (1/3,4+0.25) {$(q^1_\B,1)$};
		
		\node[circle, fill = orange, minimum width = 0.15cm, inner sep = 0cm] at (1/4,0) {};
		\node[below] at (1/4,0-0.25) {$(a_\B,1/2)$};
		\node[circle, fill = orange, minimum width = 0.15cm, inner sep = 0cm] at (1/4,4) {};
		\node[above] at (1/4,4+0.25) {$(q^1_\B,1/2)$};
		
		\node[circle, fill = orange, minimum width = 0.15cm, inner sep = 0cm] at (1/5,0) {};
		\node[below] at (1/5,0-0.25) {$(a_\B,1/3)$};
		\node[circle, fill = orange, minimum width = 0.15cm, inner sep = 0cm] at (1/5,8-8/3) {};
		\node[above] at (1/5,8-8/3+0.25) {$(q^2_\B,1/3)$};
		\node[circle, fill = orange, minimum width = 0.15cm, inner sep = 0cm] at (1/6,8-8/3) {};

		\node[circle, fill = orange, minimum width = 0.15cm, inner sep = 0cm] at (1/25,4) {};
		\node[right] at (1/30,4+0.25) {$q^1_\B$};
		
		\node[circle, fill = orange, minimum width = 0.15cm, inner sep = 0cm] at (1/25,8-8/3) {};
		\node[right] at (1/30,8-8/3+0.25) {$q^2_\B$};

		\node[circle, fill = orange, minimum width = 0.15cm, inner sep = 0cm] at (1/25,0) {};
		\node[right] at (1/30,0) {$a_\B$};

		\node[left] at (1/3 + 0.005,2) {$J(0)$};
		
		\node[left]  at (1/4 + 0.005,2) {$J(1)$};
		
		\node[left]  at (1/5 + 0.0025,4-4/3) {$J(2)$};
		
		\node[left]  at (1/6 ,4-4/3) {$J(3)$};
		
		\node[above] at (1/25,8.25) {$X(\B)$};

		\end{scope}
		
		\end{tikzpicture}
		
		\vspace{0.5cm}
		
		\caption{
			\openup 0.75em Schematic for $X(\B+1)$. Dashed lines indicate identifications.
			The bonding map $f^{\B+1}_\B$ projects to the right.}\label{SeparablyBellamySuccessor1Input}
	\end{figure}
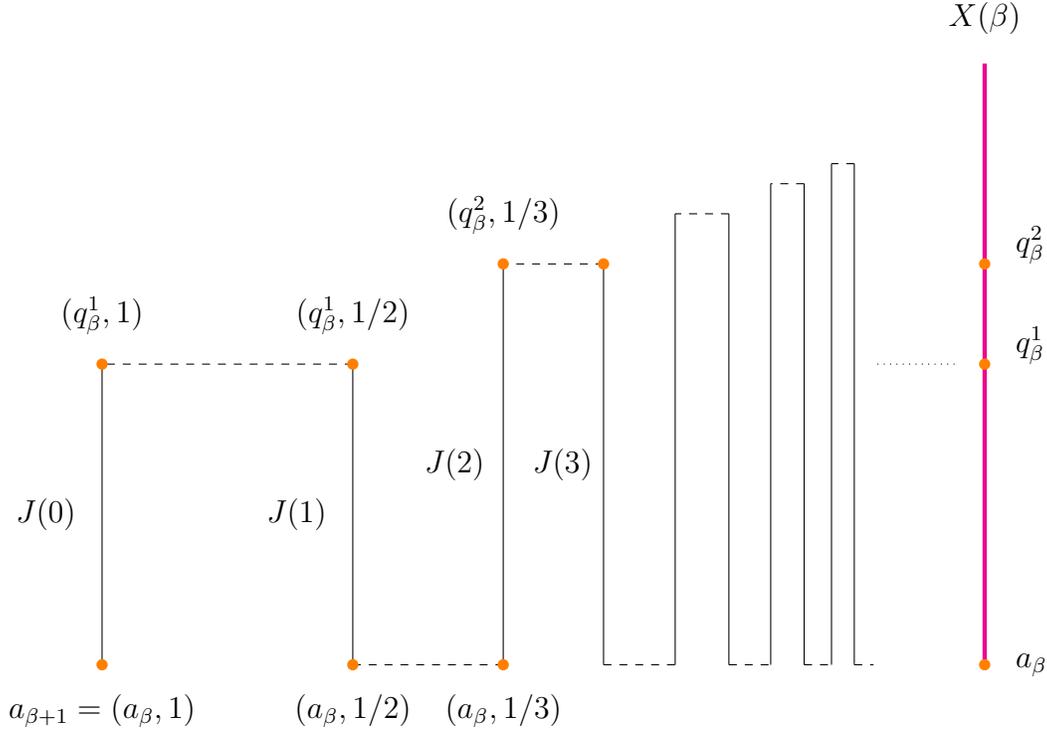

	\begin{claim5}
		$X(\B+1)$ is compact metric.
	\end{claim5}
	
	\begin{proof}
		Let $\cP$ be the partition induced on $N$ by the identifications.
		Each partition element is either a singleton or doubleton hence closed.
		Let $P \in \cP$ be contained in the open $U \subset N$.
		We claim some open neighborhood $W$ of $P$ is a union of partition elements.
		Then $V = U \cap W$ witnesses how $\cP$ is upper semicontinuous and $X(\B+1)$ is compact metric by \cite{nadlerbook} Lemma 3.2.
		
		For $P$ a singleton of some $J(2n)$ take $W = J(2n) - \{b_{4n},b_{4n+1}\}$.
		For $P$ a singleton of some $J(2n+1)$ take $W = J(2n+1) - \{b_{4n+2},b_{4n+3}\}$.
		For $P$ a singleton of $X(\B)$ observe $U \cap X(\B)$ is an open neighborhood of $P$ in $X(\B)$.
		Since $F = f^{\B+1}_\B$ respects $\cP$ the open set $W = F^{-1} \big (U \cap X(\B) \big )$ is a union of partition elements.
		
		For $P$ a doubleton without loss of generality some $n \in \NN$ has $P = \big  \{b_{4n+1}, b_{4n+2} \big \}$.
		Since $J(2n) - b_{4n}$ and $J(2n+1) - b_{4n +3}$ are open in $N$ 
		so is the union $W = \big (J(2n) - b_{4n} \big ) \cup \big (J(2n+1) - b_{4n +3} \big )$.
		The case for $P = \big  \{b_{4n+3}, b_{4(n+1)} \big \}$ is similar.
	\end{proof}

	\begin{claim5}\label{5succ(a)}
		The composant $\K(a_{\B+1}) = X(\B+1)-X(\B)$.  Hence Condition (a) holds for all $\G,\D \le \B+1$.
	\end{claim5}
	
	\begin{proof}Let $n \in \NN$ be arbitrary and $U \subset J(n) - \{c_{n},c_{n+1}\}$ open.
		Since $J(n)$ is irreducible from $c_{n}$ to $c_{n+1}$ 
		boundary bumping implies $J(n) - U = A \cup B$ is the disjoint union of two nonempty clopen sets
		that include $c_{n}$ and $c_{n+1}$ respectively.
		It follows
		
		\begin{center}
			$X(\B+1)-U = \big (J(1) \cup \ldots J(n-1) \cup A \big ) \cup \big (B \cup J(n+1) \cup \ldots \cup X(\B) \big)$
		\end{center}
		
		is a disjoint union of two clopen sets that include $a_{\B+1}$ and contain $X(\B)$ respectively.
		
		Now suppose the subcontinuum $K$ connects $a_{\B+1}$ to $X(\B)$. 
		Observe for each $n \in \NN$ the set $O = J(0) \cup J(1) \cup \ldots \cup J(n-1) - c_n$ is clopen in $X(\B+1) - c_n$
		and has $a_{\B+1} \in O$.
		We conclude all $c_n \in K$.

		For $K$ proper it excludes some open $U \subset J(n) - \{c_{n},c_{n+1}\}$.
		Then the two clopen sets from the first paragraph contradict how $K$ is connected.
		We conclude $\K(a_{\B+1})  \subset X(\B+1)-X(\B)$.
		The other inclusion is witnessed by the subcontinua $J(1) \cup J(2) \cup \ldots \cup J(n)$.
	\end{proof}

	Recall each $J(n)$ is hereditarily unicoherent.
	For $n \le m$ it is straightforward to prove by induction each subcontinuum $I(n,m) = $ $J(n) \cup \ldots \cup J(m)$ is hereditarily unicoherent.

	\begin{claim5}
		Each subcontinuum of $\K(a_{\B+1})$ is contained in some $I(n,m)$.
	\end{claim5}
	
	\begin{proof}
		Suppose $K \subset \K(a_{\B+1})$ is a proper subcontinuum.
		Without loss of generality assume $a_{\B+1} \in K$.
		We claim $K$ meets only finitely many $J(n)$.
		For otherwise there is a sequence $n(1),n(2),\ldots$ with $n(i) \to \infty$ and elements $x_i \in K \cap J(n(i))$.
		
		Consider the sequence $f^{\B+1}_\B(x_1), f^{\B+1}_\B(x_2), \ldots$ in $X(\B)$.
		Since $X(\B)$ is compact metric $f^{\B+1}_\B(x_i)$ has a subsequence tending to some $x \in X(\B)$.
		It follows $x_i$ has a subsequence tending to $(x,0)$.
		Since $K$ is closed it includes $(x,0) \in X(\B)$.
		But then Claim 2 says $K = X(\B+1)$ and so $K \not \subset \K(a_{\B+1})$ contrary to assumption.
	\end{proof}

	\begin{claim5}
		$X(\B+1)$ is hereditarily unicoherent.
	\end{claim5}
	
	\begin{proof}
		
		Suppose $K$ and $L$ are proper subcontinua.
		For $K$ and $L$ contained in $\K(a_{\B+1})$ Claim 3 says $K \cup L \subset I(n,m)$ for some $n \le m$. 
		Since $I(n,m)$ is hereditarily unicoherent $K \cap L$ is empty or connected.
		For $K ,L \subset X(\B)$ then $K \cap L$ is empty or connected since $X(\B)$ is hereditarily unicoherent. 
		
		Now suppose $K$ meets $\K(a_{\B+1})$ and $X(\B+1)-\K(a_{\B+1}) = X(\B)$.
		We claim $K =  K' \cup J(n+1) \cup \ldots \cup X(\B)$ 
		for some $n \in \NN$ and subcontinuum $K' \subset J(n)$ with $c_{n+1} \in K'$.
		To that end let $n \in \NN$ be minimal with $K \cap J(n) \ne \0$.
		Then $K \cap J(n-1) = \0$ and so $c_n \notin K$.
		Observe the subcontinuum $I(0,n) \cup K$ connects $a_{\B+1}$ to $X(\B)$.
		Hence $I(0,n) \cup K=X(\B+1)$ by Claim 2 and $K$ contains $J(n+1) \cup \ldots \cup X(\B)$.
		Since $O = J(0) \cup J(1) \cup \ldots \cup J(n) - c_{n+1}$ is clopen in $X(\B+1) - c_{n+1}$ and disjoint from $X(\B)$
		we must have $c_{n+1} \in K$.
		
		Suppose $K \cap J(n) = A \cup B$ is the disjoint union of two nonempty closed sets with $c_{n+1} \in A$.
		Since $K \cap J(n)$ is closed in $X(\B+1)$ so are $A$ and $B$.
		Observe $C = J(n+1) \cup J(n+2) \cup \ldots \cup X(\B)$ is closed and  $C \cap J(n) = \{c_{n+1}\}$.
		Thus $K = \big ( (C \cap K) \cup A \big ) \cup B$ is a disjoint union of nonempty closed sets.
		We conclude $K \cap J(n)$ is connected.
		
		Thus $K =  K' \cup J(n+1) \cup \ldots \cup X(\B)$ and  $L =  L' \cup J(m+1) \cup \ldots \cup X(\B)$
		for some $m,n \in \NN$ and subcontinua $K' \subset J(n)$ and $L' \subset J(m)$.
		For $m \le n$ we have $K \cap L = L$ is connected.
		For $n \le m$ we have $K \cap L = K$ is connected.
		For $n=m$ we have $K \cap L =  (L'\cap K') \cup J(m+1) \cup \ldots \cup X(\B)$ which is a subcontinuum by hereditary unicoherence of $J(n)$.
	\end{proof}

	\begin{claim5}
		Condition (b) holds for all $\G,\D \le \B+1$.
	\end{claim5}
	
	\begin{proof}

		By induction each $\bigcup \big \{f^\G_\D  \big (X(\G) - X(\D) \big ): \B+1 > \G > \D \big \} = \K(a_\D)$.
		Thus we can factor the set $\bigcup \big \{f^\G_\D  \big (X(\G) - $ $X(\D) \big ): \B + 1 \ge \G > \D \big \}$ as the union $f^{\B+1}_\D  \big (X(\B+1) - X(\D) \big ) \cup \K(a_\D)$. So it is enough to show the second factor is contained in $ \K(a_\D) $. To that end write

		
		\begin{center}
			
			$f^{\B+1}_\D  \big (X(\B+1) - X(\D) \big ) = f^{\B+1}_\D  \big (X(\B+1) - X(\B) \big ) \cup f^{\B+1}_\D  \big (X(\B) - X(\D) \big )$	
			
		\end{center}
		
		By definition $f^{\B+1}_\D = f^\B_\D \circ f^{\B+1}_\B$. Since $f^{\B+1}_\B$ is a retraction the second term equals $f^{\B}_\D  \big (X(\B) - X(\D) \big )$ which is contained in $\K(a_\D)$ by Condition (b) for earlier stages.
		
		Claim \ref{5succ(a)} says $X(\B+1) - X(\B) = \K(a_{\B+1})$. Since $q^n_{\B+1}$ is a half-tail we have $\K(a_{\B+1}) = \bigcup _n [a_{\B+1},q^n_{\B+1}]$ by Property (2). Hence the first term can be written
		$f^{\B+1}_\D \big (\K(a_{\B+1}) \big ) = f^{\B+1}_\D \big ( \bigcup_n[a_{\B+1}, q^n_{\B+1}] \big ) = \bigcup_n f^{\B+1}_\D \big ( [a_{\B+1}, q^n_{\B+1}] \big )$ which equals $\bigcup_n[a_{\D}, q^n_{\D}] = \K(a_\D)$ by coherence of the bonding maps and Property (2) at stage $\D$ respectively.
	\end{proof}

	\begin{claim5}
		The sequence $q^n_{\B+1} = c_{2n-1}$ is a half-tail at $a_{\B+1}$.
	\end{claim5}
	
	\begin{proof}
		
		It is straightforward to verify $J(0) \cup J(1) \cup \ldots \cup J(2n-2)$ is a subcontinuum irreducible from $a_{\B+1}$ to $c_{2n-1}$.
		By hereditary unicoherence that subcontinuum is $[a_{\B+1}, c_{2n-1}]$.
		Since each $c_{2n+1} \notin [a_{\B+1}, c_{2n-1}]$ we have $q^{n+1}_{\B+1} \notin [a_{\B+1}, q^n_{\B+1}]$ which is Property (1).
		To prove Property (2) observe \mbox{$\bigcup_n [a_{\B+1}, q^n_{\B+1}] = $ $J(1) \cup J(2) \cup \ldots = \K(a_{\B+1})$} by Claim 2.

		To prove Property (3) suppose $x \in X(\B+1) -  [a_{\B+1}, q^n_{\B+1}]$.
		Observe $[a_{\B+1}, c_{2n-1}] - c_{2n-1}$ is clopen in $X(\B+1) - c_{2n-1}$.
		Thus the continuum $[a_{\B+1},x]$ includes the point $c_{2n-1}$.
		By Zorn's lemma the continuum $[a_{\B+1},x]$ contains a subcontinuum irreducible from $a_{\B+1}$ to $c_{2n-1}$.
		Since $[a_{\B+1}, c_{2n-1}]$ is the only such subcontinuum 
		we have $[a_{\B+1}, c_{2n-1}] \subset [a_{\B+1},x]$ and so $ [a_{\B+1}, q^n_{\B+1}] \subset [a_{\B+1},x]$.
	\end{proof}
	
	For the successor continua in our system we need  to use how the half-tail is thick. This holds only for successor stages. We will later see the limit continua are indecomposable. Hence their every subcontinuum has void interior and they cannnot admit a thick half-tail.
	
	\begin{claim5}\label{thick}
		The half-tail $q^n_{\B+1}$ is thick. 
	\end{claim5}
	
	\begin{proof}
		We claim each $J(n)^\circ$ contains the dense open subset $J(n) - \{c_n,c_{n+1}\}$.
		Hence $[a_{\B+1}, q^n_{\B+1}] ^\circ$ contains the dense subset $J(1)^\circ \cup J(2)^\circ \cup \ldots \cup J(2n-2)^\circ$.
		Let $x \in J(n) - \{c_n,c_{n+1}\}$ be arbitrary.
		Recall $J(n)$ is a copy of some $[a_\B,q^m_\B]$ with $x$ corresponding to some $y \in [a_\B,q^m_\B]$
		and each of $c_n,c_{n+1}$ corresponding to exactly one of $a_\B$ or $q^m_\B$.
		
		Choose open $U \subset X(\B)$ with $y \in U \subset X(\B) - \{a_\B,q^m_\B\}$ 
		and positive $\displaystyle \E < \min \Big \{ \Big |\frac{1}{m} - \frac{1}{m+1} \Big |, \Big |\frac{1}{m} - \frac{1}{m-1} \Big | \Big \}$.  
		Observe $\displaystyle U \times \Big (\frac{1}{m}  - \E, \frac{1}{m}  + \E \Big )$ is an open subset of $N$ is disjoint from $\{b_0,b_1,b_2,\ldots\}$.
		Hence it corresponds to an open neighborhood of $x$ in $X(\B+1)$.
		The choice of $\E$ ensures it is contained in \mbox{$J(n) - \{c_n,c_{n+1}\}$}.
	\end{proof}
	
	\begin{claim5}
		There exists a tailing family $\cD_{\B+1} = \{D_{\B+1}^1, D_{\B+1}^2 , \ldots \}$ on $X(\B+1)$ that makes $f^{\B+1}_\B$ coherent.
	\end{claim5}
	
	\begin{proof}
		
		We first construct the sets $D^M_{\B+1}$.
		For now let $M \in \NN$ be fixed.
		Since $X(\B+1)$ is metric there is a countable basis $U_1,U_2 \ldots$ for $[a_{\B+1},q_{\B+1}^M]$.
		By the first paragraph $U_1$ meets some $J(n) \subset [b,r^M]$ that maps homeomorphically onto $[a,q^m]$ for some $m \le M$.
		Since $U_1 \cap J(n)$ is open in $J(n)$ the image $f^{\B+1}_\B(U_1) \cap [a_\B,q^n_\B]$ is open in $[a_\B,q^n_\B]$.
		Since $m \le M$ the definition of a tailing family says $D^M_\B \cap [a_\B,q^m_\B]$ is dense in $[a_\B,q^m_\B]$.
		Thus $f^{\B+1}_\B(U_1) \cap [a_\B,q^m_\B]$ contains infinitely many elements of $D^M_\B \cap [a_\B,q^m_\B]$.
		Choose one such $d(1) \in D^M_\B \cap [a_\B,q^n_\B]$ and select $c(1) \in U_1$ with $c(1)  \mapsto d(1)$.

		Proceed by induction.
		At stage $r$ we have chosen distinct $c(i) \in U_i$ for $i=1,2,\ldots, r-1$ and $d(i) = f\big ((c(i) \big)$ are distinct.
		Just like before $f^{\B+1}_\B(U_r)$ includes infinitely many elements of $D^M_\B$. 
		Select some $d(r) \in f(U_r) \cap D^M_\B$ with $d(1),d(2), \ldots , d(r)$ distinct.
		Then select $c(r) \in U_r$ with $c(r)  \mapsto d(r) $.
		
		By construction we get a countable dense subset $E^M_{\B+1} = \{c(1),c(2),\ldots\}$ of distinct elements of $[a_{\B+1},q_{\B+1}^M]$.
		For each $d \in D^M - \{d(1),d(2),\ldots\}$
		use surjectivity to select $c \in [a_{\B+1},q^M_{\B+1}]$ with $c \mapsto d$. Adjoin all such $d$ to $E^M_{\B+1}$ to get the set $D^M_{\B+1}$.
		By construction $f^{\B+1}_\B$ induces a bijection \mbox{$D^M_{\B+1} \to D^M_\B$}.
		
		Now let $M$ vary over $\NN$.
		We get a family $\cD_{\B+1} = \{D^1_{\B+1},D^2_{\B+1}, \ldots\}$ of countable subsets of $X(\B+1)$ 
		with each $f^{\B+1}_\B:D^M_{\B+1} \to D^M_\B$ a bijection and $D^M_{\B+1} \subset [a_{\B+1},q_{\B+1}^M]$ dense.
		Since the elements of $\cD_\B$ are pairwise disjoint so are the elements of $\cD_{\B+1}$. 
		Claim \ref{thick} shows each $[a_{\B+1},q^m_{\B+1}]$ is the closure of its interior. From this it follows $D^M_{\B+1} \cap [a_{\B+1},q^m_{\B+1}]$ is dense in $ [a_{\B+1},q^m_{\B+1}]$ whenever $m \le M$. We conclude $\cD_{\B+1}$ is a tailing family.
		
		To show coherence first recall by construction $J(0)$ is a copy of $[a_\B,q^1_\B]$ and projects under $f^{\B+1}_\B$ onto that copy. Since $a_{\B+1} \in J(0)$ corresponds to the point $a_\B \in [a_\B,q^1_\B]$ we have $a_{\B+1} \mapsto a_\B$. 
		Recall we define $q^n_{\B+1} = c_{2n-1}$ and by definition $c_{2n-1} \sim \big (q^n_\B, 1/(2n-1) \big )$. Since $f^{\B+1}_\B$ is induced by the projection $X(\B) \times [0,1] \to X(\B)$ we have $q^n_{\B+1} \mapsto q^n_{\B} $. 
		Finally recall $[a_{\B+1}, c_{2n-1}] = $ $J(0) \cup J(1) \cup \ldots \cup J(2n-2)$. By cooherence this set maps onto $[a_\B,q^1_\B] \cup [a_\B,q^2_\B] \cup \ldots \cup [a_\B,q^n_\B]$. By Property (1) for $\B$ the image equals $[a_\B,q^n_\B]$ as required.
	\end{proof}

	\section{The Limit Stage}\label{5.4}
	
	\noindent
	This section deals with the limit stage of our construction. Henceforth assume $\A \le \W_1$ is a limit ordinal and \mbox{$\{X(\B); f^\B_\G : \B,\G< \A\}$} a coherent system of hereditarily unicoherent metric continua. For all $\B,\G,\D < \A$ we assume the objects (i) and (ii) from Section \ref{5.4} have been specified and Conditions (a) and (b) hold.
	
	We define  $X(\A) = \varprojlim \{X(\B); f^\B_\G\}$ and each $f^\A_\B$ as the projection from the inverse limit onto its factors. For each $\G<\A$ we identify  $X(\G)$ with the subset $\big \{x\in X(\A): x_\B = x_\G$ for all $\B > \G \big \}$ of $X(\A)$. 
	
	That $X(\A)$ is hereditarily unicoherent follows from a straightforward modification of \cite{HULimits} Corollary 1. To see $X(\A)$ is metric we observe by definition $\A$ is a countable ordinal. 
	The product $\prod_{\B < \A} X(\B)$ of countably many metric spaces is itself a metric space. The inverse limit $X(\A)$ is by definition a subset of that product and therefore a metric space 
	
	It remains to show the enlarged system is coherent; to check Conditions (a) and (b) hold for the enlarged system; and to specify the data (i) and (ii) for $X(\A)$. By coherence at earlier stages each $a_\B \mapsto a_\G$ and $q^n_\B \mapsto q^{n}_\G$. Hence there are well defined points $a_\A = (a_\B)_{\B<\A}$ and $q^n_\A = (q^n_\B)_{\B<\A}$ in the limit $X(\A)$ with $a_\A \mapsto a_\B$ and $q^n_\A \mapsto q^n_\B$. For ease of notation write $a$ and $q^n$ instead of $a_\A$ and $q^n_\A$ respectively.
	
	For the proof that $q^n$ is a half-tail we refer to \cite{Me4} where we introduce the more complicated notion of a \textit{tail} and study inverse limits of tails. The proof for half-tails follows from a close reading of the proofs of \cite{Me4} Claims \ref{longlim4} and \ref{longlim6} and Lemma \ref{betlim}. Hence we have the next claim.

	\begin{claim5}\label{Lht}
		The sequence $\displaystyle q^n$ is a half-tail at $a$. Moreover for each $n \in \NN$ we have $[a,q^n] = \varprojlim \big \{[a_\B,q^n_\B] ; f^\B_\G \big \}$.
	\end{claim5}
	
	Recall the proper subcontinuum $R \subset X$ is called a \textit{rung} to mean each other subcontinuum $K \subset X$ 
	is either disjoint from, contained in, or contains $R$. By a \textit{ladder} on $X$ we mean a nested collection of rungs of $X$ with dense union.
	The proof that $X(\A)$ is indecomposable will follow from the existence of a ladder.

	\begin{lemma}\label{5thm3}
		Suppose $X$ admits a ladder.
		Then $X$ is indecomposable.
	\end{lemma}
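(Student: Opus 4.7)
The plan is to argue by contradiction. Suppose $X$ admits a ladder $\cL$ yet $X = A \cup B$ for two proper subcontinua $A, B$. Since $A$ and $B$ are proper closed subsets of the connected space $X$, they must meet, so $A \cap B \ne \0$. I will combine the rung property with the nested structure of $\cL$ and the density of $\bigcup \cL$ to force $X \subset A$ or $X \subset B$, contradicting properness.

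The first and key step is the dichotomy that every rung $R \in \cL$ lies inside $A$ or inside $B$. The rung property applied to the subcontinuum $A$ leaves three cases: $R \cap A = \0$, $R \subset A$, or $A \subset R$. In the first case $R \subset X = A \cup B$ is disjoint from $A$, hence $R \subset B$; the second is already what I want; only $A \subset R$ must be ruled out. Assuming $A \subset R$ and applying the rung property with $B$ in place of $A$: the case $R \cap B = \0$ is impossible because $\0 \ne A \cap B \subset R \cap B$; the case $R \subset B$ gives $X = A \cup B \subset R \cup B = B$, contradicting $B$ proper; the case $B \subset R$ gives $X = A \cup B \subset R$, contradicting $R$ proper. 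The dichotomy follows.

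Set $\cL_A = \{R \in \cL: R \subset A\}$ and $\cL_B = \{R \in \cL: R \subset B\}$, so $\cL_A \cup \cL_B = \cL$ (possibly with overlap). I claim at least one of these families is cofinal in the chain $\cL$; then by nestedness every rung is contained in some member of, say, $\cL_A$, so $\bigcup \cL \subset A$, and density yields $X = \overline{\bigcup \cL} \subset A$, the desired contradiction. If neither is cofinal, pick $R_A^*, R_B^* \in \cL$ witnessing the failures: no rung above $R_A^*$ lies in $\cL_A$ and no rung above $R_B^*$ lies in $\cL_B$. By nestedness these are comparable; say $R_A^* \subset R_B^*$. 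Then $R_B^* \supset R_A^*$ forces $R_B^* \notin \cL_A$, hence $R_B^* \in \cL_B$; but $R_B^* \supset R_B^*$ forces $R_B^* \notin \cL_B$, a direct contradiction. The main obstacle I expect is the rung analysis in step two, where a single rung must be compared against both halves of the decomposition simultaneously and the properness of the rung itself invoked; the cofinality pigeonhole at the end is then routine bookkeeping on a nested chain.
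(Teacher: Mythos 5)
Your proof is correct, but it takes a genuinely different route from the paper's. The paper first uses boundary bumping to show every rung has void interior (if $R^\circ\neq\0$, the closure of any component of $X-R$ would have to be disjoint from $R$ by the rung trichotomy, contradicting boundary bumping), and then uses density of $\bigcup\cL$ plus nestedness to show every proper subcontinuum with nonvoid interior would be contained in some rung; hence every proper subcontinuum is nowhere dense, which is the standard equivalent of indecomposability. You instead work directly with the definition ``$X$ is not the union of two proper subcontinua'': given $X=A\cup B$, you play the rung trichotomy off against both $A$ and $B$ to force each rung into $A$ or into $B$, and then a cofinality pigeonhole on the nested chain traps $\bigcup\cL$, hence its closure $X$, inside one closed proper piece. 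Your argument is more elementary in that it avoids boundary bumping and the (unproved-in-the-paper but standard) equivalence between indecomposability and all proper subcontinua being nowhere dense; the paper's argument buys the stronger and later-used fact that rungs, and indeed all proper subcontinua, have void interior. Both proofs are complete; the only points worth making explicit in yours are that $A\cap B\neq\0$ because $A,B$ are closed and $X$ is connected, and that $\overline{\bigcup\cL}\subset A$ uses that the subcontinuum $A$ is closed.
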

	
	\begin{proof}
		
		We first show rungs have void interior.
		Suppose otherwise the rung $R \subset X$ has $R^\circ \ne \0$.
		Let $b \in X-R$ be arbitrary and $C$ the component of $b$ in $X-R$.
		Then $b$ witnesses how $\overline C \not \subset R$.
		At the same time $C \subset X-R$ which implies $ \overline C \subset X - R^\circ $ which in turn implies $R \not \subset \overline C$.
		Since $R$ is a rung the subcontinua $\overline C$ and $R$ must be disjoint.
		But this contradicts boundary bumping which says says $\overline{C}$ meets $R$.
		We conclude each $R^\circ = \0$.
		
		Now suppose $\cL$ is a ladder on $X$
		and the proper subcontinuum $L \subset X$ has nonvoid interior.
		Since $\bigcup \cL$ is dense some rung $R \in \cL$ meets $L^\circ$ and $X-L$ hence contains $L$.
		Since $R$ has void interior so does $L$.
		We conclude each subcontinuum of $X$ has void interior.
		This is equivalent to $X$ being indecomposable.
	\end{proof}
	
	Since our simplest example of a rung is the limiting arc of the $\sin(1/x)$ continuum, and each $X(\B+1)$ looks like a $\sin(1/x)$ continuum limiting to $X(\B)$, the next claim should be unsurprising. 
	
	\begin{claim5}
		Each $X(\G)$ is a rung of $X(\A)$.
	\end{claim5}

	\begin{proof} 	
		The proof uses transfinite induction. 
		Suppose for some $\wt \A \le \A$ and all $\G \le \B< \wt\A$ that $X(\G) \subset X(\B)$ is a rung.
		Now suppose the continuum $K \subset X(\wt \A)$ meets $X(\G)$ and $X(\wt \A)- X(\G)$. 
		
		For $\wt \A = \wt \B + 1$ a successor ordinal there are two possibilities. First that $K \subset X(\wt \B)$. In that case $K \subset X(\wt \B)$ meets $X(\G)$ and $X(\wt \B) - X(\G)$ and we have $X(\G) \subset K$ since $X(\G) \subset X(\wt \B)$ is a rung by induction. Second that $K$ meets $ X(\wt \B+1) - X(\wt \B)$. It follows from Claim \ref{5succ(a)} that $X(\wt \B) \subset X(\wt \B+1)$ is a rung. Hence $X(\wt \B) \subset K$ and $X(\G) \subset K$ by Property (a) at stage $\wt \B$.
		
		For $\wt \A$ a limit ordinal there are again two possibilities. First that $K \subset X(\B)$ for some $\B < \wt \A$. In that case $X(\G) \subset X(\B)$ is a rung by induction hence $X(\G) \subset K$ as required. Second that $K$ is contained in no $X(\B)$. In that case recall $K = \varprojlim \{f^{\wt \A}_\D(K); f^\B_\D: \B,\D < \wt \A\}$. 
		
		If there was $\B < \wt \A$ with $f^{\wt \A}_\D(K) \subset X(\B)$ for all $\D > \B$ we would have \mbox{$K \subset X(\B)$} contrary to assumption.
		Thus for each $\B < \wt \A$ there is $\D > \B$ with $f^{\wt \A}_\D(K) \not \subset X(\B)$. By induction $X(\B) \subset X(\D)$ is a rung. Hence the subcontinuum $f^{\wt \A}_\D(K)$ of $X(\D)$ contains $X(\B)$ and its subset $X(\G)$. By commutativity $f^{\wt \A}_{\D'}(K) =$ $f^{\D}_{\D'} \circ f^{\wt \A}_{\D} (K)$ also contains $X(\G)$ whenever $\G \le \D' \le \D$. In particular whenever $\G \le \D' \le \B$. Since $\B < \wt \A$ is arbitrary we get $X(\G) \subset f^{\wt \A}_{\D'}(K)$ for all $\G \le \D' < \wt \A$. It follows $X(\G) \subset K$ as required.
	\end{proof}

	\begin{claim5}\label{limind}
		$X(\A)$ is indecomposable.
	\end{claim5}
	
	\begin{proof}
		Recall we define $X(\G) = \big \{(x_\B) \in X(\A): x_\B=x_\G \ \All \B > \G \big \}$. 
		By Lemma \ref{5thm3} it is enough to show $\{X(\B): \B < \A\}$ is a ladder in $X(\A)$.
		To that end let $\G < \A$ be arbitrary and $U \subset X(\G)$ open. We must show $\pi_\G^{-1}(U)$ meets some $X(\B)$.
		Let $x_\G \in U$ be arbitrary and consider the following $(x_\B) \in X(\A)$.
		For $\B \ge \G$ define $x_\B = x_\G$.
		We have $f^\B_\G(x_\B)=x_\G$ since the bonding map is a retraction.
		For $\B \le \G$ define $x_\B = f^\G_\B(x_\G)$.
		It follows $(x_\B)$ is a well defined element of $X(\A)$.
		By definition $(x_\A) \in X(\G) \cap \pi_\G^{-1}(U)$.
	\end{proof}

	\begin{claim5}\label{5a}
		Condition (a) holds for all $\G,\D \le \A$.
	\end{claim5}
	
	\begin{proof}
		
		By induction we only need to prove the case $\G = \A$.
		We must show $\K(a_\A)$ is disjoint from each $X(\B)$.
		Since $q^n$ is a half-tail at $a$ 
		each $x \in \K(a)$ is an element of some $[a,q^n] = \varprojlim [a_\B,q^n_\B]$.
		That means $x_{\B+1} \in [a_{\B+1},q^n_{\B+1}]$ for each $\B < \A$.
		Since $q^n_{\B+1}$ is a half-tail at $a_{\B+1}$ we have $[a_{\B+1},q^n_{\B+1}] \subset \K(a_{\B+1})$
		which is disjoint from $X(\B)$ by Condition (a) for $\B+1$.
		We conclude $x_{\B+1} \in  X(\B+1) - X(\B)$. Since $x_\B \in X(\B)$ we have $x_{\B+1} \ne x_\B$ hence $x \notin X(\B)$ by definition of the embedding $X(\B) \to X(\A)$.
	\end{proof}

	\begin{claim5}\label{5b}
		Condition (b) holds for all $\G,\D \le \A$.
	\end{claim5}
	
	\begin{proof}
		By induction and commutativity it is enough to show $f^\A_\D \big (X(\A)-X(\D) \big ) = \K(a_\D)$ for each $\D \le \A$. Claim \ref{5a} says each $X(\B)$ is disjoint from $\K(a_\A)$.
		Hence $X(\A) - X(\B)$ contains $\K(a_\A)$ which maps onto $\K(a_\B)$ by coherence.
	\end{proof}

	\begin{claim5}\label{limtail}
		There exists a tailing family $\cD_{\A} = \{D_{\A}^1, D_{\A}^2 , \ldots \}$ on $X(\A)$ that makes $f^{\A}_\B$ coherent for each $\B < \A$.
	\end{claim5}
	
	\begin{proof}
		It is clear that $a \mapsto a_\B$ and $q^n \mapsto q^n_\B$. Claim \ref{Lht} says each $[a,q^n] = $ $\varprojlim \big \{[a_\B,q^n_\B] ; f^\B_\G \big \}$ hence $[a,q^n] \mapsto [a_\B,q^n_\B]$.
		
		By induction each $f^\B_\G$ is coherent hence induces bijections $D_\B^n \to D_\G^n$. That means the inverse limit $D^n_\A = \varprojlim \big \{D_\B^n; f^\B_\G \big \}$ is a well defined countable subset of $X(\A)$. Lemma 2.5.9 of \cite{Engelking} says the restrictions $f^\A_\B: D^n_\A \to D^n_\B$ are bijective. Claim \ref{Lht} says  each $D^n_\A \subset [a,q^n]$.
		
		To see $D^1_\A, D^2_\A,\ldots$ are pairwise disjoint take $x \in D^n_\A$ and $y \in D^m_\A$ for some $m \ne n$. By definition $f^\A_0(x) \in D_0^n$ and $f^\A_0(y) \in D_0^m$. Since $\cD_0$ is a tailing family $D_0^m$ and $D_0^n$ are disjoint and the result follows.
		
		To show $D_\A = \{D^1_\A, D^2_\A, \ldots\}$ is a tailing family it remains to prove each \mbox{$D_\A^n \cap [a,q^m]$} is dense in $[a,q^m]$ whenever $m \le n$. To that end Claim \ref{thick} says the half-tail $q^n_{\B}$ is thick whenever $\B < \A$ is a successor ordinal. Since $\A$ is a limit ordinal the set $\Gamma = \{\B < \A: \B$ is a successor ordinal$\}$ is cofinal in $\A$. Hence we can write $X(\A) = \varprojlim \{X(\B);f^{\B}_{\G}: \B,\G \in \GG\}$ where each $X(\B)$ comes with a distinguished thick half-tail.
		
		Recall $\pi_\B = f^\A_\B$ and the open subsets of $[a,q^m] = \varprojlim [a_\B,q_\B^m]$ 
		have the form $\pi^{-1}_\B(U)$ for open $U \subset [a_\B,q_\B^m]$.
		By thickness $[a_\B,q_\B^m]^\circ$ is dense in $[a_\B,q_\B^m]$.
		Hence $V = U \cap [a_\B,q_\B^m]^\circ$ is a nonempty open subset of $X(\B)$.
		Since $[a_\B,q_\B^m] \subset [a_\B,q_\B^n]$ we see $V$ is open in $[a_\B,q_\B^n]$ as well.
		Since $D_\B^n$ is dense in $[a_\B,q_\B^n]$ there is $d \in V \cap D_\B^n$.
		Since $D_\B^n = \pi_\B(D^n_\A)$ we have $\pi_\B(x) = d$ for some $x \in D^n_\A$. But then $\pi_\B(x) \in V \subset U$ and so $x \in \pi_\B^{-1}(U)$ as required.
	\end{proof}
	
	Finally we have the main example.
	
	\begin{thm5} \label{5thm5}
		There exists a separable Bellamy continuum.
	\end{thm5}
	
	\begin{proof}
		
		By transfinite recursion we have a coherent system of metric continua $\{X(\A);f^\A_\B: \B,\A < \W_1\}$. Let $X$ be the inverse limit. Recall this section assumes $\A\le \W_1$ is a limit ordinal. For the special case $\A= \W_1$ we have $X = X(\A)$ and Claim \ref{limtail} says $X$ admits a tailing family. Lemma \ref{tailingdense} then says $X$ is separable.
		
		Once we have expressed $X$ as the inverse limit of a system of indecomposable metric continua, Conditions (a) and (b) and Theorem 1 of \cite{one} will say $X$ is indecomposable with at most two composants.
		The \textit{trivial composant} $E \subset X$ is the set $\bigcup \{X(\A): \A < \W_1\}$ of eventually constant $\W_1$- sequences.
		The \textit{nontrivial composant} $X-E$ is equal to the limit \mbox{$\varprojlim \{\K(a_\A);f^\A_\B: \B,\A < \W_1\}$}. In this case the trivial composant contains the point $(a_\A)_{\A<\W_1}$ hence is nonempty by construction.
		
		Claim \ref{limind} says $X(\A)$ is indecomposable whenever $\A< \W_1$ is a limit ordinal. We claim $\GG = \{\A<\W_1: \A$ is a limit ordinal$\}$ is cofinal in $\W_1$. Thus we can write $X = \varprojlim \{ X(\A);f^\A_\B: \A,\B \in \GG\}$ as the inverse limit of a system of indecomposable metric continua.
		
		To that end let $\B<\W_1$ be arbitrary and observe $\B \times \W$ is well-ordered under $(\D,m) \le (\G,n) \iff \big( m < n $ or $m = n$ and $\D\le \G )$. Hence $\B \times \W$ is isomorphic to some ordinal $\wt \B$. It is clear $\B \times \W$ has no top element hence $\wt \B$ is a limit ordinal. Since the initial segment $\B \times \{1\}$ is a copy of $\B$ we have $\B \le \wt \B$ and since $\B \times \W$ is countable we have $\wt \B < \A$. 
		
		We conclude $X$ has exactly two composants.
		Let the Bellamy continuum $\wt X$ be obtained by choosing any $x \in E$ and $y \in X-E$ and identifying $x \sim y$. Since $\wt X$ is the image of $X$ under the quotient map it is separable.
	\end{proof}

	
	\section{Separability and Metrisability}
	
	\noindent
	We make some observations on the global properties of the inverse limit $X$ from Section \ref{5.4}.
	The first is the tailing family $\cD = \{D^1, D^2, \ldots \}$ on $X$ has each $D^n$ contained in the nontrivial composant
	$\varprojlim \{\K(a_\A);f^\A_\B: \B,\A < \W_1\}$.
	Thus we have the following.
	
	\begin{claim5}\label{nontrivsep}
		The nontrivial composant of $X$ is separable.
	\end{claim5}
	
	Thus the nontrivial composant can be considered \textit{small}.
	On the other hand Lemma \ref{nonsep} shows the trivial composant is \textit{large}.
	
	\begin{lemma}\label{nonsep}
		Suppose the topological space $T$ is the union of an $\W_1$-chain of proper closed subsets.
		Then $T$ is non-separable.
	\end{lemma}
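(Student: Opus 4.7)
The plan is to argue by contradiction, exploiting the fact that $\W_1$ has uncountable cofinality. Suppose $T = \bigcup_{\A < \W_1} F_\A$ with each $F_\A$ a proper closed subset and $F_\A \subset F_\B$ whenever $\A \le \B$, and suppose toward a contradiction that $D \subset T$ is a countable dense subset.

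First I would assign to each $d \in D$ some ordinal $\A(d) < \W_1$ with $d \in F_{\A(d)}$, which is possible since the $F_\A$ cover $T$. The key step is then the standard observation that $\{\A(d) : d \in D\}$, being a countable subset of $\W_1$, is bounded: this is precisely the fact (cited in Section 2 of the paper) that each countable subset of $\W_1$ has an upper bound. Fix such a bound $\A^* < \W_1$. Since the chain is increasing we have $D \subset F_{\A^*}$.

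Now $F_{\A^*}$ is closed and contains the dense set $D$, so $T = \overline{D} \subset \overline{F_{\A^*}} = F_{\A^*}$. This contradicts the hypothesis that $F_{\A^*}$ is a proper subset of $T$, completing the argument.

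There is really no obstacle here beyond correctly invoking the uncountable cofinality of $\W_1$; the only subtlety worth flagging is making sure the chain is interpreted as increasing (which is forced by the word \emph{chain}) so that the countable set $\{\A(d)\}$ admitting a bound $\A^*$ really does give $D \subset F_{\A^*}$.
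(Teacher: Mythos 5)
Your argument is correct and is essentially identical to the paper's own proof: both index each point of a countable dense set by a member of the chain containing it, use the uncountable cofinality of $\W_1$ to bound these indices by some $\A^* < \W_1$, and then derive a contradiction from the fact that the closed proper set $B(\A^*)$ would contain the closure of $D$, namely all of $T$. No gaps; nothing further to add.
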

	
	\begin{proof}
		Let $\cB = \{B(\A): \A < \W_1 \}$ be a chain of proper closed subsets of $T$.
		Suppose $D = \{d_1,d_2,\ldots\} \subset T$ is dense.
		For each $n \in \NN$ there exists $\A(n) < \W_1$ with $d_n \in B(\A(n))$.
		The countable subset $\{\A(n): n \in \NN\}$ has an upper bound $\A < \W_1$.
		Since $\cB$ is a chain we have $\{d_1,d_2,\ldots\} \subset B(\A)$.
		Since $B(\A)$ is closed and proper $D$ is not dense.
		Hence $T$ is non-separable.
	\end{proof}
	
	The trivial composant of $X$ is the union of the $\W_1$-chain $\{X(\A): \A < \W_1 \}$ of proper subcontinua.
	Thus Lemma \ref{nonsep} gives the next two claims.
	
	\begin{claim5}\label{trivsep}
		The trivial composant of $X$ is non-separable.
	\end{claim5}
	
	\begin{claim5}
		The continuum $X$ is separable but not hereditarily separable and not dense-hereditarily separable. 
	\end{claim5}
	
	This suggests two open problems.
	
	\begin{qn5}
		Is the nontrivial composant of $X$ hereditarily separable or dense-hereditarily separable? 
	\end{qn5}
	
	\begin{qn5}\label{q2}
		Does there exist an hereditarily separable or dense-hereditarily separable Bellamy continuum? 
	\end{qn5}
	
	The Introduction mentions all known Bellamy continua are Stone-\v Cech remainders, 
	or arise from inverse-limits constructions similar to Section \ref{5.4}.
	For a positive answer to Question \ref{q2} we imagine an entirely new class of examples 
	would be needed to obviate Lemma \ref{nonsep}.
	
	Our next result is that neither composant of $X$ is metrisable.
	Lemma \ref{nonmet} is similar to Lemma \ref{nonsep} and applies to the trivial composant.
	
	\begin{lemma}\label{nonmet}
		No metric space is the union of an $\W_1$-chain of compact proper subsets.
	\end{lemma}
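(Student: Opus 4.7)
The plan is to argue by contradiction: assume $(M,d)$ is a metric space and $M = \bigcup_{\A < \W_1} K_\A$ for an $\W_1$-chain of compact proper subsets. I would split the work into a key lemma and an extraction step.

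The first step is to prove the auxiliary claim that every compact subset $K \subset M$ is contained in some single $K_\A$. Since $K$ is compact metric it is separable; choose a countable dense $\{y_n\} \subset K$, pick for each $n$ an ordinal $\A(n) < \W_1$ with $y_n \in K_{\A(n)}$, and note that the countable set $\{\A(n) : n \in \NN\}$ has an upper bound $\A < \W_1$. By the chain property $\{y_n\} \subset K_\A$, and since $K_\A$ is compact (hence closed) in $M$, taking closures inside $M$ gives $K \subset K_\A$. This mirrors exactly the argument of Lemma \ref{nonsep}, simply recording that the target closed set is compact rather than arbitrary.

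Next I would manufacture a compact set that violates the claim. Because the $K_\A$ form a strict chain, I can fix an increasing sequence $\A(1) < \A(2) < \cdots$ in $\W_1$ and pick $x_n \in K_{\A(n+1)} \setminus K_{\A(n)}$. Let $\A^* = \sup_n \A(n)$; since $\W_1$ has uncountable cofinality this supremum is still strictly below $\W_1$. Every $x_n$ lies in $K_{\A(n+1)} \subset K_{\A^*}$, and $K_{\A^*}$ is compact metric, hence sequentially compact. Extract a subsequence $x_{n_k} \to x \in K_{\A^*}$. The set $S = \{x\} \cup \{x_{n_k} : k \in \NN\}$, being a convergent sequence with its limit, is compact in $M$.

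Apply the key claim to $S$: there is $\B < \W_1$ with $S \subset K_\B$. Choose $k$ large enough that $\A(n_k) \ge \B$; by the chain property $K_\B \subset K_{\A(n_k)}$, so $x_{n_k} \notin K_{\A(n_k)}$ forces $x_{n_k} \notin K_\B$, contradicting $x_{n_k} \in S \subset K_\B$. The crux of the argument is step two, where one must exploit the uncountable cofinality of $\W_1$ to keep the whole sequence $(x_n)$ inside a single compact $K_{\A^*}$, so that ordinary sequential compactness in a metric space furnishes the convergent subsequence. Once a limit is in hand the contradiction is immediate.
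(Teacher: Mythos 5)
Your auxiliary claim (every compact subset of $M$ lies in a single $K_\A$) is correct and well proved, but the compact set $S$ you manufacture in the second step cannot contradict it. By construction every $x_{n_k}$ and the limit $x$ lie in $K_{\A^*}$, so $S \subset K_{\A^*}$ and the key claim is satisfied by taking $\B = \A^*$. Your final paragraph silently assumes the $\B$ furnished by the key claim can be met by some $\A(n_k)$, i.e.\ that $\B \le \A(n_k)$ for some $k$, which forces $\B < \A^*$; nothing guarantees this, and with $\B = \A^*$ there is no $k$ with $\A(n_k) \ge \B$, so no contradiction is reached. The obstruction is structural: any compact set assembled from a countable subfamily of the chain sits inside $K_{\sup_n \A(n)}$, so the only compact subset capable of violating your key claim is one that is not contained in any proper $K_\A$ at all --- namely $M$ itself.

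The repair is essentially the paper's proof, and you already have every ingredient. Your second step shows that an arbitrary sequence in $M$ lies in a single compact, hence sequentially compact, $K_{\A^*}$, and therefore has a convergent subsequence; since $M$ is metric this makes $M$ sequentially compact and hence compact. Now apply your key claim with $K = M$: there is $\B < \W_1$ with $M \subset K_\B$, contradicting the assumption that $K_\B$ is proper. (The paper finishes equivalently by taking a countable base of the compact metric $M$ and finding a single $K_\B$ meeting every basic open set; that $K_\B$ is dense and closed, hence equal to $M$.)
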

	
	\begin{proof}
		First recall \cite{nagata} Theorem IV.5 (F) says compactness and sequential compactness are equivalent for metric spaces.
		Now suppose the metric space $M$ is the union of the chain $\cK = \{K(\A): \A < \W_1 \}$ of compact proper subsets.
		We claim $M$ is compact.
		
		Suppose $x_1,x_2,\ldots$ is an arbitrary sequence in $M$.
		For each $n \in \NN$ there exists $\A(n) < \W_1$ with $x_n \in K(\A(n))$.
		The countable subset $\{\A(n): n \in \NN\}$ has an upper bound $\A < \W_1$.
		Since $\cK$ is a chain we have $\{x_1,x_2,\ldots\} \subset K(\A)$.
		Since $K(\A)$ is compact metric there is a point $x \in K(\A)$ 
		and a subsequence $(y_n)$ of $(x_n)$ with $y_n \to x$.
		We conclude $M$ is sequentially compact hence compact.
		
		Since $M$ is compact metric it admits a countable base $U_1,U_2,\ldots$ of open sets.
		Since $M = \bigcup \cK$ each $U_n \in \NN$ meets $K(\B(n))$ for some $\B(n)< \W_1$. 
		The countable subset $\{\B(n): n \in \NN\}$ has an upper bound $\B < \W_1$.
		Since $K(\B)$ meets all $U_n$ it is dense.
		Since $K(\B)$ is compact and $M$ metric $K(\B)$ is closed.
		Thus $K(\B)=M$ contradicting the assumption that each $K(\B)$ is proper.
	\end{proof}
	
	The trivial composant of $X$ is the union of the $\W_1$-chain $\bigcup \{X(\A): \A < \W_1 \}$ of proper subcontinua.
	Thus Lemma \ref{nonmet} gives the next claim.
	
	\begin{claim5}
		The trivial composant of $X$ is non-metrisable.
	\end{claim5}
	
	Next we prove the same for the nontrivial composant.
	
	\begin{claim5}
		The nontrivial composant of $X$ is non-metrisable.
	\end{claim5}
	
	\begin{proof}
		Clearly the nontrivial composant $X-E$ is connected.
		Claim \ref{nontrivsep} says $X-E$ is separable.
		Lemma 3 of \cite{Lipham01} says $X-E$ is strongly indecomposable as defined in \cite{Lipham01} Section 2.
		For a contradiction suppose $X-E$ is metrisable.
		
		Let the $X \to \wt X$ be the quotient map from Theorem \ref{5thm5}.
		Since $X-E$ is homeomorphic to its image we know $\wt X$ is a compactification of $X-E$.
		Then \cite{Lipham01} Theorem 8  says $\wt X$ is irreducible between some pair of points $\{a,b\}$.
		But that means $a$ and $b$ have different composants contradicting how $\wt X$ is a Bellamy continuum.
	\end{proof}
	
	To close the section we remark that metrisability cannot be droped from the hypothesis of Lemma \ref{nonmet}.
	Example 5.8 of \cite{me1} is the closed unit ball in the Hilbert space $\ell^2(\W_1)$ 
	of square-summable functions $\W_1 \to \mathbb R$ under the weak topology. 
	In fact $\ell^2(\W_1)$ is even a continuum and the elements of the $\W_1$-chain are nowhere dense subcontinua.
	
	\section{Embedding Properties}
	
	\noindent
	Bellamy \cite{one} has shown each metric continuum is a retract of a Bellamy continuum.
	We use a similar technique to show each hereditarily unicoherent metric continuum is a retract of a separable Bellamy continuum.
	
	For our example we took $X(0)$ the $\sin(1/x)$ continuum. There is no obstruction to using instead any hereditarily unicoherent metric continuum $Y$.
	In case $Y$ admits thick half-tail $q^n_0$ at the point \mbox{$a_0 \in Y$} we can simply take $Y = X(0)$ as the bottom element of the system $\{X(\A);f^\A_\B: \A,\B< \W_1\}$ from Section \ref{5.3}.
	
	Writing $X$ for the limit we see the projection $\pi_0: X \to X(0)$ is a retraction. 
	Therefore each $\pi_0(x)$ is a point of the trivial composant. Let the separable Bellamy continuum $\wt X$ be obtained by choosing some $x$ in the nontrivial composant and identifying $x \sim \pi_0(x)$.
	Clearly $\pi_0$ induces a retraction $\wt X \to Y$ from a separable Bellamy continuum.
	
	In case $Y$ has no such tail, we must first build an hereditarily unicoherent metric continuum $X(0)$ with a thick half-tail $q^n_0$ at the point \mbox{$a_0 \in X(0)$} and a retraction $X(0) \to Y$. We then take $X(0)$ as the bottom element of the inverse system and compose the retractions $\wt X \to X(0)$ and $X(0) \to Y$ to get the desired retraction onto $\wt X \to Y$.
	It remains to construct such a continuum $X(0)$.
	
	\begin{lemma}\label{X0retract}
		Suppose the metric continuum $Y$ is hereditarily unicoherent.
		There exists a retraction $X(0) \to Y$ from an hereditarily unicoherent metric continuum $X(0)$ 
		with a thick half-tail $q^n_0$ at the point $a_0 \in X(0)$.
	\end{lemma}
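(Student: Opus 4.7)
My plan is to adapt the successor-stage construction of Section~\ref{5.3}, using $Y$ in place of $X(\B)$. The main difficulty is that $Y$ is not assumed to have a half-tail, so the pieces $J(n)$ cannot simply be copies of $[a_\B,q^n_\B]$; they must be designed so their union accumulates on $Y$ while preserving hereditary unicoherence and admitting a retraction back to~$Y$.

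Concretely, I would embed $Y$ in the Hilbert cube $H$, fix a countable dense sequence $y_1,y_2,\ldots$ in $Y$, and form $X(0) \subset H \times [0,1]$ as the union of $Y \times \{0\}$ with a chain of arcs $L_1,L_2,\ldots$, where each $L_n$ lives in the slab $H \times [1/(n+1),1/n]$ and joins $(y_n,1/n)$ to $(y_{n+1},1/(n+1))$, chosen pairwise almost-disjoint so that the closure in $H \times [0,1]$ adds precisely $Y \times \{0\}$.

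The verification then proceeds in direct analogy with Claims~1 through~7 of Section~\ref{5.3}. I would check in turn that $X(0)$ is a metric continuum; that it is hereditarily unicoherent by a case analysis on subcontinua (either lying in the chain $G = \bigcup L_n$, or lying in $Y \times \{0\}$, or containing a cofinal tail of $G$ and hence, by closure, all of $Y \times \{0\}$); and that, with $a_0 = (y_1,1)$ and $q^n_0 = (y_{n+1},1/(n+1))$, the sequence $(q^n_0)$ is a thick half-tail with $[a_0,q^n_0] = L_1 \cup \ldots \cup L_n$ and $\K(a_0) = G$. Property (3) of Definition~\ref{halftail} here is automatic because for any $x \in Y \times \{0\}$ the same cofinal-tail argument forces $[a_0,x] = X(0)$, while for $x$ in the chain the comparison with $[a_0,q^n_0]$ is a linear one.

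The step I expect to be hardest is defining the retraction $r \colon X(0) \to Y$. On $Y \times \{0\}$ the map $r$ is the identity, and continuity of $r$ forces each $L_n$ to be mapped into a subcontinuum of $Y$ containing $y_n$ and $y_{n+1}$ whose diameter tends to $0$ as $n \to \infty$. When $Y$ is not locally connected, in particular when the irreducible subcontinuum $[y_n,y_{n+1}]_Y$ is not a continuous image of an arc, the map $r|_{L_n}$ cannot be defined naively. The essential technical point is to realise each $L_n$ not as a bare geometric arc in $H \times [0,1]$ but together with a chosen continuous surjection $L_n \to S_n \subset Y$ for some subcontinuum $S_n$ containing both $y_n$ and $y_{n+1}$, whose diameter is controlled by a diagonal choice of $(y_n)$ made against successively finer open covers of~$Y$. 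Once this inductive selection is in place, continuity of $r$ at each point of $Y \times \{0\}$ follows from the upper-semicontinuity of the operator $(y,y') \mapsto [y,y']_Y$ on pairs in $Y$, and the rest of the retraction argument is routine.
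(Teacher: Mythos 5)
Your overall strategy --- a tower of rungs accumulating on $Y \times \{0\}$, followed by the same case analysis as in Section~\ref{5.3} --- matches the paper's, but your choice of \emph{arcs} for the rungs $L_n$ creates a gap that cannot be repaired, and it is exactly the point where the paper does something different. A hereditarily unicoherent metric continuum need not contain any nondegenerate arcwise connected (equivalently, by Hahn--Mazurkiewicz, Peano) subcontinuum: the pseudo-arc is hereditarily indecomposable, hence hereditarily unicoherent, and every continuous map from an arc into it is constant. For such a $Y$ the ``chosen continuous surjection $L_n \to S_n \subset Y$ onto a subcontinuum containing $y_n$ and $y_{n+1}$'' that your fix relies on simply does not exist unless $y_n = y_{n+1}$. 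Worse, the scheme fails globally: $G = \bigcup_n L_n$ is arcwise connected and dense in your $X(0)$, so any continuous $r \colon X(0) \to Y$ has $r(G)$ arcwise connected, hence a single point $\{z\}$ when $Y$ is the pseudo-arc, whence $r(X(0)) \subset \overline{r(G)} = \{z\}$, which is incompatible with $r$ restricting to the identity on the nondegenerate $Y$. The auxiliary facts you invoke are also unavailable in general: upper semicontinuity of $(y,y') \mapsto [y,y']_Y$ fails wherever $Y$ is not locally connected, and there is no reason a dense sequence $(y_n)$ can be chosen with $\operatorname{diam}[y_n,y_{n+1}]_Y \to 0$.

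The paper avoids all of this by never introducing arcs. It fixes a countable dense set $D = \{d(1),d(2),\ldots\}$ and sets $p_n = d(s(n))$ for the sequence $s=(1,1,2,2,1,1,2,2,3,3,\ldots)$, which repeats each value and hits it infinitely often; the $n$-th rung is then a literal copy $[p_1,p_n] \times \{1/n\}$ of the irreducible subcontinuum $[p_1,p_n] \subset Y$ (a continuum by hereditary unicoherence), and consecutive rungs are glued in a zig-zag at alternately their far ends $(p_n,1/n) \sim (p_{n+1},1/(n+1))$ and their near ends $(p_1,1/n) \sim (p_1,1/(n+1))$, exactly as in the successor stage. The retraction is then induced by the first-coordinate projection $N \to Y$, which on each rung is just the inclusion $[p_1,p_n] \hookrightarrow Y$, so continuity at $Y \times \{0\}$ is automatic and no map from an arc into $Y$ is ever needed. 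To salvage your write-up, replace each arc $L_n$ by a copy of an irreducible subcontinuum of $Y$ between the relevant points and take the retraction to be the projection.
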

	
	\begin{proof}
		Choose a countable dense subset $D = \{d(1),d(2), \ldots\}$ of $Y$.
		For the sequence $s = (1,1,2,2,$ $1,1, 2,2,3,3, \ldots)$ define each $p_n = d(s(n))$.
		Define the closed subset $N \subset Y \times [0,1]$.
		
		\begin{center}
			$\displaystyle N =  \Big( \bigcup_{n \in \NN}  [p_1,p_n] \times \{1/n\}  \Big ) \cup \big ( Y \times \{0\} \big )$.
		\end{center}
		
		To obtain $X(0)$ from $N$ first make for each odd $n \in \NN$ the identification $\big (p_n, 1/n \big ) \sim  \big (p_{n+1}, 1/(n+1) \big )$ .
		Then make for each even $n \in \NN$ the ident- ification $\big (p_1, 1/n \big ) \sim  \big (p_1, 1/(n+1) \big )$ .
		
		The picture for $X(0)$ is similar to Figure \ref{SeparablyBellamySuccessor1Input}.
		Then $X(0)$ is a metric space as a subset of the product $Y \times [0,1]$ of metric spaces. It is straightforward to show the first summand of $N$ is connected with closure $X(0)$. Since $Y \times [0,1]$ is compact so is $X(0)$. Identify $Y$ with the subset $Y \times \{0\}$ of $X(0)$.
		
		A similar argument to Section \ref{5.3} shows $X(0)$ is hereditarily unicoherent and $q^n_0 = (p_n,1/n)$ is a thick half-tail at $a_0 = (p_1,1)$
		Clearly the projection \mbox{$N \to Y$} onto the first coordinate respects the partition.
		Hence it induces a retraction $X(0) \to Y$.
	\end{proof}
	
	The theorem follows.
	
	\begin{thm5}\label{retract}
		Each  hereditarily unicoherent metric continuum is a retract of a separable Bellamy continuum.
	\end{thm5}
	
	There are several directions one might generalise Theorem \ref{retract}. The first is to drop the reference to hereditary unicoherence.
	
	\begin{qn5}
		Is each metric continuum a retract of a separable Bellamy continuum?
	\end{qn5}

	The methods in Sections \ref{5.3} and \ref{5.4} rely heavily on hereditary unicoherence and do not generalise.
	One special case that seems approachable is when $Y$ is arcwise connected. 
	
	If we take the proof of Lemma \ref{X0retract} and replace each $[p_1,p_n]$ with some arc from $p_1$ to $p_n$ the resulting space $X(0)$ is a \textit{spiral} over $Y$. That means the composant $\K(a_0)$ of $Y - X(0)$ of $a_0$ is an open ray. 
	So while $X(0)$ is not itself hereditarily unicoherent, we see any two subcontinua of $\K(a_0)$ have connected intersection.
	Since the maps $f^\A_0$ map $X(\A)-X(0)$ into $\K(a_0)$ it seems likely one could adapt our construction while paying very close attention to where hereditary unicoherence is used, and thus get a retractions from separably Bellamy continua onto arcwise connected continua. In particular \cite{nadlerbook} Theorem 8.23 says this includes all locally connected continua.
	
	One might also try to replace the hypothesis of $Y$ being metrisable with merely being separable.
	
	\begin{qn5} \label{sepretract}
		Is each separable hereditarily unicoherent continuum a retract of a separable Bellamy continuum?
	\end{qn5}
	
	Secions \ref{5.3} and \ref{5.4} do not generalise to answer Question \ref{sepretract}. 
	This is because in the non-metric realm we might encounter the following type of subset.
	
	\begin{definition}
		The dense subset $D$ of the topological space $T$ is called \textit{resolvable} to mean it is the disjoint union of two dense subsets. Otherwise we call the subset \textit{irresolvable}.
	\end{definition}
	
	Irresolvable sets are pathological objects that never occur in metric continua.
	
	\begin{lemma} \label{resolvable}
		Every dense subset of a metric continuum is resolvable.
	\end{lemma}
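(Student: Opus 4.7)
The plan is to show $D$ inherits enough structure from the ambient metric continuum $X$ to admit a straightforward combinatorial splitting. First I would establish that $D$ has no isolated points in itself. Since $X$ is a nondegenerate connected Hausdorff space, every singleton $\{x\}$ fails to be open (otherwise $\{x\}$ would be clopen and disconnect $X$), so $X$ is perfect. Given $d\in D$ and any open $U\subset X$ containing $d$, the set $U-\{d\}$ is nonempty and open, hence meets $D$ by density; this point of $D\cap(U-\{d\})$ is distinct from $d$, so $d$ is not isolated in $D$.

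Next I would note that $D$, as a subspace of the separable metric space $X$, is itself a separable metric space. Fix a countable base $\{V_n : n \in \NN\}$ of nonempty open subsets of $D$. Because $D$ is crowded and Hausdorff, each $V_n$ is infinite: an iterated argument shows that a nonempty open set in a crowded Hausdorff space contains infinitely many points, since after removing any finite set one is left with a nonempty open set.

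Now I would construct the splitting by a simple recursion. At stage $n$, at most $2(n-1)$ points have been chosen, and $V_n$ is infinite, so we may select two new points $x_n,y_n\in V_n$ so that $\{x_1,y_1,\ldots,x_n,y_n\}$ are pairwise distinct. Set $A=\{x_n:n\in\NN\}$ and $B=D-A$. Then $A$ meets each $V_n$ by construction, so $A$ is dense in $D$. Since each $y_n\in B$, the set $B$ also meets each $V_n$, so $B$ is dense in $D$. As $\{V_n\}$ is a base, density against basic opens gives density outright, and $D=A\sqcup B$ is the desired resolution.

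The only potentially delicate point is the first step, namely that $D$ genuinely has no isolated points; this rests on the nondegeneracy and connectedness of the ambient continuum, not the metric hypothesis. The metric hypothesis itself is used only to guarantee that $D$ has a countable base, which is what lets the recursion list the relevant open sets one at a time. In a non-metric setting one would lose this countable enumeration, which is precisely why irresolvable dense subsets can appear in more general continua (as used in the following section).
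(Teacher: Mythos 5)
Your proof is correct, and its combinatorial core is the same as the paper's: fix a countable base, recursively pick two fresh points of $D$ from each basic open set, let $A$ be the first family and $D-A$ the rest, and observe both meet every basic open set. Where you diverge is in justifying that each basic open set contains infinitely many points of $D$. The paper gets this by showing every nonvoid open subset of $X$ is \emph{uncountable}, invoking the fact that open sets contain proper subcontinua and that nondegenerate continua surject onto $[0,1]$ by Urysohn's lemma; you instead observe that a nondegenerate connected Hausdorff space has no isolated points, so $D$ is crowded and every nonempty open subset of $D$ is infinite. Your route is more elementary and strictly more general --- it shows every dense subset of any crowded separable metric space is resolvable, with compactness and connectedness entering only to rule out isolated points --- whereas the paper's route leans on continuum-theoretic machinery it already has on hand. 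A further small point in your favour: by taking the countable base in $D$ itself and selecting the points $x_n,y_n$ from $V_n\subset D$, you make explicit that the chosen points lie in $D$, which is needed for $D=A\sqcup(D-A)$ to be a resolution of $D$; the paper's wording selects $a_i,b_i\in U_i\subset X$ and leaves the restriction to $D$ implicit. Both arguments are sound.
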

	
	\begin{proof}
		Suppose $X$ is a metric continuum. We first show each nonvoid open subset $U \subset X$ is uncountable.
		Corollary 5.5 of \cite{nadlerbook} says $U$ contains a proper subcontinuum $K$.
		Then Urysohn's lemma (see \cite{NagataTopology} Theorem III.2) says $K$ surjects onto $[0,1]$ hence is uncountable.
		We conclude $U$ is uncountable.
		
		Now suppose $X$ is metric and $D \subset X$ dense. Since finite subsets are closed we see $D$ is infinite. Let $U_1,U_2,\ldots$ be a countable basis for $X$. Since each $U_i$ is infinite we can use induction to select distinct elements $a_1,b_1,a_2,b_2, \ldots \in X$ with each $a_i,b_i \in U_i$. By construction $A = \{a_1,a_2,\ldots\}$ and $B = \{b_1,b_2,\ldots\}$ are dense. Since $D(2) = D - A$ contains $B$ it is also dense. For $D(1) = A$ we get a disjoint union $D=D(1) \cup D(2)$ into disjoint dense subsets.
	\end{proof}
	
	On the other hand Theorem 4.1 of \cite{Submaximal} shows the construction 
	of a countable irresolvable subset of the non-metric separable continuum $[0,1]^\cn$.
	The following lemma will be useful.

	\begin{lemma} \label{irresolvabledifference}
		Suppose the subsets $D$ and $F$ of the continuum $X$ differ by only finitely many elements.
		Then neither or both of $D$ and $F$ are resolvable.
	\end{lemma}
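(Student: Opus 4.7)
The plan is to reduce the statement, by induction on $|D\triangle F|$, to the following one-point version: for any $x \in X$ and $D \subset X$ with $x \notin D$, the set $D$ is resolvable if and only if $D\cup\{x\}$ is resolvable. (The case of removing a point is the same statement read in reverse.) Iterating this through the finitely many elements of $D\triangle F$ converts $D$ into $F$ while preserving resolvability at each step.

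Before proving the one-point case, I record the one topological fact about the ambient space: a continuum has no isolated points, since $\{p\}$ clopen in a nondegenerate connected Hausdorff space is impossible. Consequently, adding or removing a single point preserves density. Supersets of dense sets are dense automatically; and if $A$ is dense and $x\in X$, then for any nonempty open $U$ the set $U-\{x\}$ is still a nonempty open set (as $\{x\}$ is not open), so $A\cap(U-\{x\})\neq\varnothing$, giving density of $A-\{x\}$. In particular $D$ and $F$ are either both dense or both not dense; if neither is dense then neither is resolvable by definition, and the conclusion is vacuous. So I may assume both are dense.

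For the one-point step, the forward direction is immediate: if $D = A \sqcup B$ with $A$ and $B$ dense, then $(A \cup \{x\}) \sqcup B$ is a partition of $D \cup \{x\}$ into two dense sets. For the converse, write $D \cup \{x\} = A \sqcup B$ with $A$, $B$ dense and, without loss of generality, $x \in A$; then $D = (A - \{x\}) \sqcup B$, and the no-isolated-points remark above ensures $A - \{x\}$ is still dense, witnessing resolvability of $D$. No step is delicate and the only substantive topological input is the absence of isolated points, so I do not anticipate any real obstacle.
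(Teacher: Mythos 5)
Your proof is correct and rests on exactly the same topological input as the paper's: a continuum has no isolated points, so nonempty open sets are infinite and dense sets remain dense after deleting finitely many points. The only difference is organizational --- you induct through one-point modifications, whereas the paper compares $D$ and $F$ each to the intermediary $A = D \cup F$ and strips the finitely many extra points from each half of a partition of $A$ in a single step; both arguments are sound, and you also handle the (vacuous) non-dense case explicitly, which the paper leaves implicit.
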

	
	\begin{proof}
		We claim $D$ is resolvable if and only if $A = D \cup F$ is resolvable.
		First suppose $D = D(1) \cup D(2)$ is a disjoint union of dense subsets.
		For $A(1) = D(1)$ and $A(2) = D(2) \cup (A-D)$ the equality $A = A(1) \cup A(2)$ witnesses how $A$ is resolvable. 
		
		Now suppose $A = A(1) \cup A(2)$ is a disjoint union of dense subsets.
		Clearly $D(1) = A(1) \cap D$ and $D(2) = A(2) \cap D$ are disjoint with union $D$.
		By assumption there are finite subsets $C(1),C(2) \subset X$ with
		$D(1) = A(1) - C(1)$ and $D(2) = A(2) - C(2)$.
		Since nonvoid open subsets of $X$ are infinite we see $C(1)^\circ = C(2)^\circ = \0$.
		Thus $\overline {D(1)} = $ $\overline {A(1) - C(1)} = \overline {A(1)} - C(1)^\circ = \overline {A(1)} = X$.
		
		Hence $D(1)$ is dense and likewise for $D(2)$.
		We conclude $D$ is resolvable.
		The same proof shows $F$ is resolvable if and only if $A = F \cup D$ is resolvable.
		We conclude $D$ is resolvable if and only if $F$ is resolvable. 
	\end{proof}

	Henceforth suppose the Hausdorff continuum $X(\B+1)$ has a thick half-tail $q^n_{\B+1}$ at \mbox{$a_{\B+1} \in X(\B+1)$} 
	and $\cD_{\B+1} = \{D_{\B+1}^1, D_{\B+1}^2, \ldots\}$ is a tailing family on $X(\B+1)$.
	Let the Hausdorff continuum $X(\B+2)$ and thick half-tail $q^n_{\B+2}$ at $a_{\B+2} \in X(\B+2)$ 
	and bonding map $f^{\B+2}_{\B+1}: X(\B+2) \to X(\B+1)$ be as constructed in Section \ref{5.3}.
	The next claim shows the obstruction to choosing an appropriate $\cD_{\B+2}$ on $X(\B+2)$.
	
	\begin{claim5}\label{resolvableStep}
		Suppose $X(\B+2)$ admits a tailing family that makes $f^{\B+2}_{\B+1}$ coherent.
		Then $[a_{\B+1}, q_{\B+1}^1] \cap D^2_{\B+1}$ is resolvable as a subset of $[a_{\B+1}, q_{\B+1}^1]$.
	\end{claim5}
	
	\begin{proof}
		
		Suppose the tailing family $\cD_{\B+2} = \{D_{\B+2}^1, D_{\B+2}^2, \ldots\}$ makes $f^{\B+2}_{\B+1}$ densely coherent.
		Define the subset $F = D^2_{\B+2} - \{c_1,c_2\}$ of $X({\B+2})$.
		Recall the subcontinua $J(0), J(1),J(2) \subset X(\B+2)$ have dense interior and
		
		\begin{center}
			$J(0) \cap J(1) = \{c_1\} \ \ \ \ \ \ \ \ J(1) \cap J(2) = \{c_2\} \ \ \ \ \ \ \ \ J(0) \cap J(2) = \0$.
		\end{center}
		
		Therefore $F \cap J(0)$, $F \cap J(1)$ and $F \cap J(2)$ are pairwise disjoint and dense in $J(0)$, $J(1)$ and $J(2)$ respectively.
		
		Recall the bonding map $f^{\B+2}_{\B+1}$ induces projections 
		$J(0) \to [a_{\B+1}, q_{\B+1}^1]$ and  $J(1) \to [a_{\B+1}, q_{\B+1}^1]$ and \mbox{$J(2) \to [a_{\B+1}, q_{\B+1}^2]$}.
		Therefore $A = f^{\B+2}_{\B+1} \big ( F \cap J(0) \big )$ and $B = f^{\B+2}_{\B+1} \big ( F \cap J(1) \big )$ are dense in $[a_{\B+1}, q_{\B+1}^1]$
		and $f^{\B+2}_{\B+1}  \big (F \cap J(2) \big )$ is dense in $[a_{\B+1}, q_{\B+1}^2]$.
		Since $[a_{\B+1}, q_{\B+1}^1] \subset [a_{\B+1}, q_{\B+1}^2]$ has dense interior 
		we see $C = [a_{\B+1}, q_{\B+1}^1] \cap f^{\B+2}_{\B+1}  \big ( F \cap J(2) \big )$ is also dense in $[a_{\B+1}, q_{\B+1}^1]$. 
		
		By assumption $f^{\B+2}_{\B+1}$ induces a bijection $ D^2_{\B+2} \to D^2_{\B +1}$ and therefore $A$, $B$ and $C$ are pairwise disjoint.
		Since $F$ is contained in $J(0) \cup J(1) \cup J(2)$ we see $A \cup B \cup C = [a_{\B+1}, q_{\B+1}^1] \cap f^{\B+2}_{\B+1}(F)$.
		We conclude $[a_{\B+1}, q_{\B+1}^1] \cap f^{\B+2}_{\B+1}(F)$ is a resolvable subset of $[a_{\B+1}, q_{\B+1}^1]$. 
		
		By definition $F$ differs from $D^2_{\B+2}$ by only finitely many elements.
		Thus $f^{\B+2}_{\B+1}(F)$ differs from $f^{\B+2}_{\B+1}\big (D^2_{\B+2} \big ) = D^2_{\B+1}$ by only finitely many elements.
		The same holds taking intersections with $[a_{\B+1}, q_{\B+1}^1]$.
		The result then follows from Lemma \ref{irresolvabledifference}.
	\end{proof}
	
	Claim \ref{resolvableStep} says there is no way in general to define a suitable tailing family on $X(\B+2)$.
	For suppose $X(\B+1)$ and $[a_{\B+1}, q_{\B+1}^1]$ are non-metric.
	There is no guarantee a given dense subset of $[a_{\B+1}, q_{\B+1}^1]$ is resolvable.
	Thus a positive answer to Question 3 would go beyond the methods here.
	
	We close with the remark that irresolvable sets can be generated by trying to extend
	the system $\{X(\A);f^\A_\B: \A,\B< \W_1\}$ of continua from Sections \ref{5.3} and \ref{5.4} beyond $\W_1$.
	
	Suppose $X = X(\W_1)$ and $\cD_{\W_1}$ are constructed and $X(\W_1+1)$ is as defined in Section \ref{5.3}.
	For brevity write $a = a_{\W_1}$ and $q^n = q^n_{\W_1}$ and $\cD = \cD_{\W_1}$.
	Suppose all countable dense subsets of each $[a, q^n]$ are resolvable.
	We give only the construction of $D_{\W_1+1}^2 \subset [a_{\W_1+1}, q^2_{\W_1+1}] = J(0) \cup J(1) \cup J(2)$
	as the construction of $D_{\W_1+1}^n$ is analogous.
	
	First we split $[a, q^1] \cap D^2 = A \cup B \cup C$ into three pairwise disjoint dense subsets.
	Recall $J(0)$ and $J(1)$ are copies of $[a, q^1]$ and $J(2)$ a copy of $[a, q^2]$.
	Let $A' \subset J(0)$ and $B' \subset J(1)$ be the dense sets corresponding to $A$ and $B$ and
	\mbox{$C' \subset J(2)$} the dense set corresponding to $D^2 - (A \cup B)$.
	Define $D_{\W_1+1}^2 = A' \cup B' \cup C'$.

	
	In case all relevant countable dense subsets of $[a_{\W_1+1}, q^n_{\W_1+1}]$ are resolvable 
	we define $X(\W_1+2)$ and $\cD_{\W_1+2}$ similarly.
	Proceeding under the same assumption we continue to extend the system.
	Theorem 2 says we need no further assumptions to extend to limit ordinals.
	We claim we cannot extend to $\{X(\A); f^\A_\B: \A,\B< \WW\}$ for any $|\WW| > 2^\cn$.
	
	For then Theorem 2 says the limit $X(\WW)$ is separable.
	But at the same time Corollary 1.12 of \cite{CSbook} says every separable compactum is the image of $\B \NN$ hence has at most $|\B \NN| = 2^\cn$ points.
	But since the chain of subcontinua $X(\A) \subset X(\WW)$ is strictly increasing 
	$X(\WW)$ has cardinality at least $|\WW| > 2^\cn$ which is a contradiction.
	
	We conclude the process terminates at $\{X(\A); f^\A_\B: \A,\B< \eta\}$ for some ordinal $\eta$ with $|\eta| \le 2^\cn$.
	Thus some relevant countable dense subset $D$ of some $[a_{\eta}, q^n_{\eta}]$ is irresolvable.
	
	Closer inspection of the proposed construction of each $D^n_{\eta +1}$ reveals some such $D$ is built out of $D^N_\eta$ for some $n \le N$ 
	by successively splitting sets into two dense disjoint halves or taking intersections with $[a_{\eta}, q^m_{\eta}]$ for some $n \le m \le N$.
	
	\section{Smith's Limit is Not Separable}
	
	\noindent 
	The fundamental difference between our construction and that of Smith \cite{Smith1} is the use of identifications.
	We obtain $X(\B+1)$ from the set
	
	\begin{center}
		$\displaystyle N =  \Big( \bigcup_{n \in \NN}  [p_1,p_n] \times \{1/n\}  \Big ) \cup \big ( Y \times \{0\} \big )$.
	\end{center}
	
	by making for each $n \in \NN$ the identifications $\big (q^n_\B, 1/(2n-1) \big ) \sim \big (q^n_\B, 1/2n \big )$
	and $\big (a_\B, 1/2n \big ) \sim \big (a_\B, 1/(2n+1) \big )$.
	Smith instead obtains $X(\B+1)$ from $N$ by attaching horizontal arcs 
	from $\big (q^n_\B, 1/(2n-1) \big )$ to $\big (q^n_\B, 1/2n \big )$ and from $\big (a_\B, 1/2n \big )$ to $\big (a_\B, 1/(2n+1) \big )$
	as in Figure \ref{SeparablyBellamySuccessorArcs1Input}.

	\begin{figure}[!h]
		\centering
		
		\begin{tikzpicture}

		\usetikzlibrary{snakes}

		\begin{scope}[ xscale = -40]


		\draw[ultra thick, magenta] (1/25,0) -- (1/25,8);
		
		\foreach \n in {2,3,4,5,6}
		{
			\draw (1/2/\n,0) -- (1/2/\n,8-8/\n);
			\draw ({1/(2*\n-1)},0) -- ({1/(2*\n-1)},8-8/\n);
			
		}

		\foreach \n in {2,3,4,5,6}
		{
			\draw[ red] (1/2/\n,8-8/\n) -- ({1/(2*\n-1)},8-8/\n);
			\draw[red] ({1/(2*\n)},0) -- ({1/(2*\n+1)},0);
		}
		
		\draw[dotted] (1/20 ,4) -- (1/13,4);

		\node[circle, fill = orange, minimum width = 0.15cm, inner sep = 0cm] at (1/3,0) {};
		\node[below] at (1/3,0-0.25) {$a_{\B+1} = (a_\B,1)$};
		\node[circle, fill = orange, minimum width = 0.15cm, inner sep = 0cm] at (1/3,4) {};
		\node[above] at (1/3,4+0.25) {$(q^1_\B,1)$};
		
		\node[circle, fill = orange, minimum width = 0.15cm, inner sep = 0cm] at (1/4,0) {};
		\node[below] at (1/4,0-0.25) {$(a_\B,1/2)$};
		\node[circle, fill = orange, minimum width = 0.15cm, inner sep = 0cm] at (1/4,4) {};
		\node[above] at (1/4,4+0.25) {$(q^1_\B,1/2)$};
		
		\node[circle, fill = orange, minimum width = 0.15cm, inner sep = 0cm] at (1/5,0) {};
		\node[below] at (1/5,0-0.25) {$(a_\B,1/3)$};
		\node[circle, fill = orange, minimum width = 0.15cm, inner sep = 0cm] at (1/5,8-8/3) {};
		\node[above] at (1/5,8-8/3+0.25) {$(q^2_\B,1/3)$};
		\node[circle, fill = orange, minimum width = 0.15cm, inner sep = 0cm] at (1/6,8-8/3) {};

		\node[circle, fill = orange, minimum width = 0.15cm, inner sep = 0cm] at (1/25,4) {};
		\node[right] at (1/30,4+0.25) {$q^1_\B$};
		
		\node[circle, fill = orange, minimum width = 0.15cm, inner sep = 0cm] at (1/25,8-8/3) {};
		\node[right] at (1/30,8-8/3+0.25) {$q^2_\B$};

		\node[circle, fill = orange, minimum width = 0.15cm, inner sep = 0cm] at (1/25,0) {};
		\node[right] at (1/30,0) {$a_\B$};

		\node[left] at (1/3 + 0.005,2) {$J(0)$};
		
		\node[left]  at (1/4 + 0.005,2) {$J(1)$};
		
		\node[left]  at (1/5 + 0.0025,4-4/3) {$J(2)$};
		
		\node[left]  at (1/6 ,4-4/3) {$J(3)$};
		
		\node[above] at (1/25,8.25) {$X(\B)$};

		\end{scope}
		
		\end{tikzpicture}
		
		\vspace{0.5cm}
		
		\caption{Schematic of the new $X(\B+1)$. The arcs $I_{\B+1}(n)$ are shown in red.}\label{SeparablyBellamySuccessorArcs1Input}
		
	\end{figure}
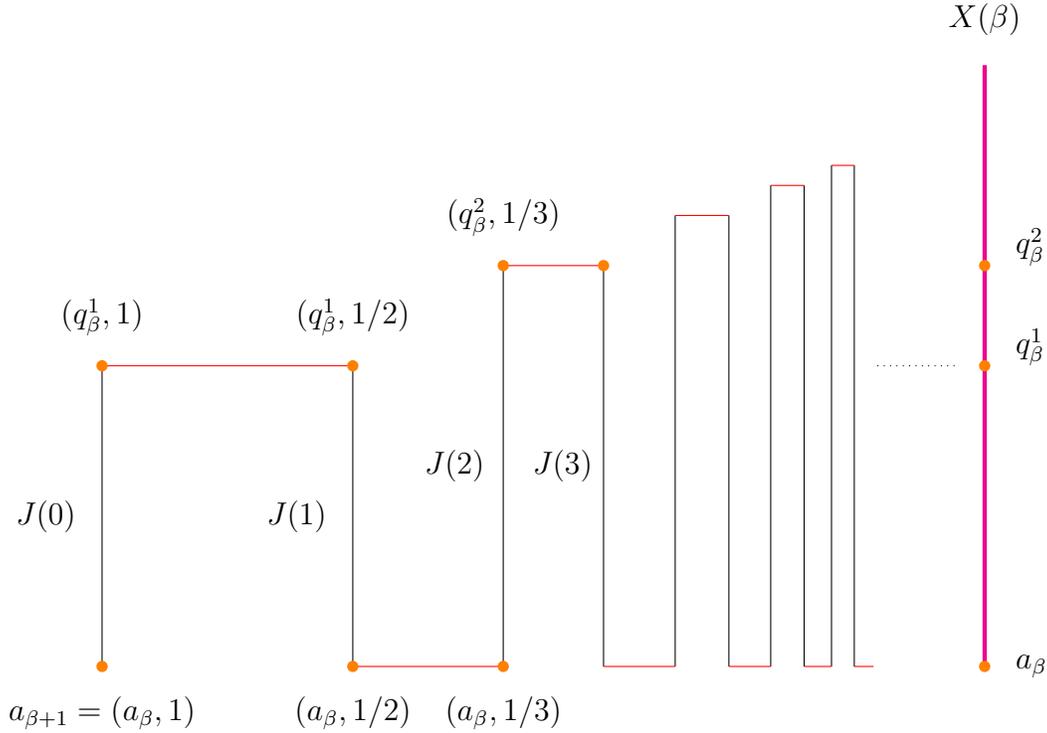

	Suppose as in Section \ref{5.4} we define each limit $X(\A)$ as the inverse limit of all previous $X(\B)$.
	However, unlike before, we define each successor $X(\B +1)$ as the union of the three subspaces of $X(\B) \times [0,1]$.

	\begin{center}
		$ \big ( \bigcup \big \{ [a_\B,q^n_\B] \times \{1/(2n-1),1/2n\} : n \in \NN \big \} \big ) \cup \big ( X(\B) \times \{0\} \big )$
		$\bigcup \big \{ \{q^n_\B\} \times [1/2n, 1/(2n-1)] : n \in \NN \big \}$
		
		$\bigcup \big \{ \{a_\B\} \times [1/(2n+1), 1/2n] : n \in \NN \big \}$
	\end{center}
	
	For each $n \in \NN$ define the arcs $I_{\B+1}({2n-1}) = \{q^n_\B\} \times [1/2n, 1/(2n-1)]$ 
	and $I_{\B+1}(2n) = \{a_\B\} \times [1/(2n+1), 1/2n]$.

	\begin{claim5}\label{notsep}
		The new limit $X$ is $\aleph_1$-cellular hence non-separable.
	\end{claim5}
	
	\begin{proof}
		Let each $V_{\A+1} \subset X(\A)$ be the interior of $I_{\A+1}(2)$.
		Observe $f^{\A+1}_\A(V_{\A+1}) = \{a_\A$\} thus each $f^{\A+1}_{\B}(V_{\A+1}) = \{a_{\B}\}$ for $\B < \A+1$.
		We claim the open sets $U_{\A+1} = \pi_{\A+1}^{-1}(V_{\A+1})$ of $X$ are pairwise disjoint.
		For suppose $x \in U_{\A+1}$. 
		Then by definition $x_{\A+1} \in V_{\A+1}$ and by commutativity $x_{\B+1} \in f^{\A+1}_{\B+1}(V_{\A+1})$ which equals $ \{a_{\B+1}\}$. Thus $x_{\B+1} = a_{\B+1}$. But since $a_{\B+1} \notin V_{\B+1}$ we must have $x \notin U_{\B+1}$
		for each $\B+1 < \A+1$.
		
		Since $\W_1$ is a limit ordinal the map by $\B \mapsto \B+1$ from $\W_1$ to itself is well defined and injective.
		Thus $\{U_{\A+1}: \A < \W_1\}$ has the same cardinality as $\W_1$ namely $\aleph_1$.
		We conclude $X$ is $\aleph_1$-cellular.
	\end{proof}
	
	The proof for the example of Smith \cite{Smith1} is similar, 
	with the interiors of the sets $a^\A_1 \times [1/2,1]$ (page 594) playing the role of $V_{\A+1}$
	and the points $1_\A$ (page 595) playing the role of $a_\A$.

	\section*{Acknowledgements}
	This research was supported by the Irish Research Council Postgraduate Scholarship Scheme grant number GOIPG/2015/2744. 
	The author would like to thank Professor Paul Bankston and Doctor Aisling McCluskey for their help in preparing the manuscript.


\begin{thebibliography}{10}
		
		\bibitem{me1}
		Daron Anderson.
		\newblock Shore and non-block points in {H}ausdorff continua.
		\newblock {\em Topology and its Applications}, 193:152 -- 161, 2015.
		
		\bibitem{Me4}
		Daron Anderson.
		\newblock {Indecomposable Continuum with a Strong Non-Cut Point}.
		\newblock {\em In Preparation}, 2018.
		
		\bibitem{waves}
		David Bellamy.
		\newblock Composants of indecomposable {S}tone-\v{C}ech remainders.
		\newblock {\em Topology Proceedings}, 12:201--209, 1978.
		
		\bibitem{one}
		David Bellamy.
		\newblock Indecomposable continua with one and two composants.
		\newblock {\em Fundamenta Mathematicae}, 101(2):129--134, 1978.
		
		\bibitem{NCF2}
		Andreas Blass.
		\newblock Near coherence of filters. {II}: Applications to operator ideals, the
		{S}tone-\v{C}ech remainder of a half-line, order ideals of sequences, and
		slenderness of groups.
		\newblock {\em Transactions of the American Mathematical Society}, 300(2):pp.
		557--581, 1987.
		
		\bibitem{TerminalContinua}
		J.J. Charatonik.
		\newblock Terminal continua and quasi-monotone mappings.
		\newblock {\em Topology and its Applications}, 47(1):69 -- 77, 1992.
		
		\bibitem{HULimits}
		J.J. Charatonik and W.J. Charatonik.
		\newblock Inverse limits and smoothness of continua.
		\newblock {\em Acta Mathematica Hungarica}, 43(1-2):7--12, 1984.
		
		\bibitem{BitLike}
		Alan Dow and Klaas~Pieter Hart.
		\newblock {\v C}ech-{S}tone remainders of spaces that look like {$[0,\infty)$}.
		\newblock {\em Acta Universitatis Carolinae. Mathematica et Physica},
		34(2):31--39, 1993.
		
		\bibitem{Engelking}
		Ryszard Engelking.
		\newblock {\em General Topology}.
		\newblock Heldermann Verlag Berlin, 1989.
		
		\bibitem{Submaximal}
		I.~{Juh\' asz}, L.~{Soukup}, and Z.~{Szentmiklossy}.
		\newblock {$D$}-forced spaces: a new approach to resolvability.
		\newblock {\em ArXiv Mathematics e-prints}, 2006.
		
		\bibitem{kur2}
		Kazimierz Kuratowski.
		\newblock {\em Topology Volume II}.
		\newblock Academic Press, 1968.
		
		\bibitem{Lipham01}
		David~Sumner Lipham.
		\newblock On indecomposability of {$\beta X$}.
		\newblock {\em Topology and its Applications}, 243:65--77, 2018.
		
		\bibitem{Ccomposants}
		Stefan Mazurkiewicz.
		\newblock Sur les continus ind\'ecomposables.
		\newblock {\em Fundamenta Mathematicae}, 10(1):305--310, 1927.
		
		\bibitem{nadlerbook}
		Sam~B. Nadler~Jr.
		\newblock {\em Continuum Theory: An Introduction}.
		\newblock CRC Press, 1992.
		
		\bibitem{nagata}
		J.I. Nagata.
		\newblock {\em Modern General Topology}.
		\newblock North-Holland Mathematical Library. Elsevier Science, 1985.
		
		\bibitem{NagataTopology}
		{Jun-iti} Nagata.
		\newblock {\em Modern General Topology}.
		\newblock North-Holland, 1985.
		
		\bibitem{smith2}
		Michel Smith.
		\newblock Generating large indecomposable continua.
		\newblock {\em Pacific J. Math.}, 62(2):587--593, 1976.
		
		\bibitem{Smith1}
		Michel Smith.
		\newblock Large indecomposable continua with only one composant.
		\newblock {\em Pacific J. Math.}, 86(2):593--600, 1980.
		
		\bibitem{HI2Composants}
		Michel Smith.
		\newblock A hereditarily indecomposable {H}ausdorff continuum with exactly two
		composants.
		\newblock In {\em Topology Proc}, volume~9, pages 123--143, 1984.
		
		\bibitem{CSbook}
		Russell~C. Walker.
		\newblock {\em The Stone-{\v C}ech {C}ompactification}.
		\newblock Springer-Verlag New York, 1974.
		
	\end{thebibliography}
\end{document}